\newtheorem{theorem}{Theorem} [section]
\newtheorem{prop}[theorem]{Proposition}
\newtheorem{lemma}[theorem]{Lemma}
\newtheorem{conjecture}[theorem]{Conjecture}
\theoremstyle{definition}
\newtheorem{example}[theorem]{Example}
\newtheorem{remark}[theorem]{Remark}
\numberwithin{equation}{section}
\numberwithin{figure}{section}
\definecolor{myblue}{rgb}{0.6, 0.9, 1}
\newcommand{\Rmnum}[1]{\expandafter\@slowromancap\romannumeral #1@}
\definecolor{myblue}{rgb}{0.6, 0.9, 1}
\definecolor{mygreen}{rgb}{0,0,1}
\definecolor{purple}{rgb}{0.6,0.2,1}
\definecolor{orange}{rgb}{0.8,0,0.2}
\newcommand{\bC}{\mathbb{C}}
\newcommand{\bP}{\mathbb{P}}
\newcommand{\C}{\mathbb{C}}
\newcommand{\bQ}{\mathbb{Q}}
\newcommand{\bR}{\mathbb{R}}
\newcommand{\R}{\mathbb{R}}
\newcommand{\bN}{\mathbb{N}}
\newcommand{\bA}{\mathbb{A}}
\newcommand{\Gal}{\operatorname{Gal}}
\newcommand{\Kbar}{\overline{K}}
\newcommand{\Qbar}{\overline{\bQ}}
\newcommand{\cL}{\mathcal{L}}
\newcommand{\cS}{\mathcal{S}}
\newcommand{\rank}{\operatorname{rank}}
\newcommand\iso{\simeq}
\newcommand{\Z}{\mathbb{Z}}
\newcommand{\cX}{\mathcal{X}}
\newcommand{\cY}{\mathcal{Y}}
\newcommand{\cA}{\mathcal{A}}
\newcommand{\Crit}{\mathrm{Crit}}
\newcommand\codim{\operatorname{codim}}
\newcommand{\M}{\mathrm{M}}
\newcommand{\Prep}{\mathrm{Prep}}
\newcommand{\Rat}{\mathrm{Rat}}
\newcommand{\Tbif}{T_{\mathrm{bif}}}
\subjclass{37F46, 37P35}
\keywords{preperiodic points, Manin--Mumford Conjecture, Green currents, bifurcation}
\begin{document}

\title[The geometry of preperiodic points, in families]{The geometry of preperiodic points \\ in families of maps on $\bP^N$ 
}

\author{Laura DeMarco and Niki Myrto Mavraki}
	\address{Department of Mathematical and Computational science \\ University of Toronto Mississauga\\ 3359 Mississauga Road \\ 
Mississauga \\ ON L5L 1C6}
	\email{myrto.mavraki@utoronto.ca} 
	\address{Department of Mathematics \\ Harvard University
\\ 1 Oxford Street \\
Cambridge \\ MA 02138}
	\email{demarco@math.harvard.edu}

\begin{abstract}
We study the dynamics of complex algebraic families of maps on $\bP^N$, and the geometry of their preperiodic points.  The goal of this article is to formulate a conjectural characterization of the subvarieties of $S \times\bP^N$ containing a Zariski-dense set of preperiodic points, where the parameter space $S$ is a quasiprojective complex algebraic variety; the characterization is given in terms of the non-vanishing of a power of the invariant Green current associated to the family of maps.  This conjectural characterization is inspired by and generalizes the Relative Manin-Mumford Conjecture for families of abelian varieties, recently proved by Gao and Habegger, and it includes as special cases the Manin-Mumford Conjecture (theorem of Raynaud) and the Dynamical Manin-Mumford Conjecture (posed by Ghioca, Tucker, and Zhang).  We provide examples where the equivalence is known to hold, and we show that many recent results can be viewed as special cases.  Finally, we give the proof of one implication in the conjectural characterization.
\end{abstract}


\maketitle

\thispagestyle{empty}

\bigskip
\section{Introduction}

Let $S$ be a smooth and irreducible quasiprojective variety defined over the field $\C$ of complex numbers.  Fix an integer $d \geq 2$.  An {\bf algebraic family of endomorphisms of $\bP^N$ of degree $d$} is a morphism
	$$\Phi: S \times \bP^N \to S \times \bP^N$$
given by $\Phi(s, z) = (s, f_s(z))$, where each $f_s$ is an endomorphism of the complex projective space $\bP^N$ of degree $d$.  Throughout this article, we let $\mathcal{X}\subset S \times\bP^N$ denote a closed irreducible subvariety which is flat over $S$.  We will use boldface $\mathbf{X}$ to denote the generic fiber of $\mathcal{X}$ and let $\mathbf{\Phi}:\mathbf{P^N}\to \mathbf{P^N}$ be the map induced by $\Phi$, viewed as an endomorphism over the function field $\bC(S)$. 

Inspired by Pink's conjectures \cite{Pink:conjecture} (especially \cite[Conjecture 6.2]{Pink:conjecture:preprint}), the recent theorems of Gao and Habegger on families of abelian varieties \cite[Theorems 1.1 and 1.3]{Gao:Habegger:RMM}, and by Zhang's proposed extensions to more general dynamical systems on projective varieties \cite{Zhang:distributions}, we discuss the following aim:  {\em to characterize the subvarieties $\mathcal{X}\subset S \times\bP^N$ which contain a Zariski-dense set of preperiodic points of $\Phi$.}

This is an ambitious goal; even the case where $S$ is a point remains vastly open, where a conjectural characterization goes by the name of the Dynamical Manin-Mumford Conjecture, proposed by Shouwu Zhang \cite{Zhang:distributions} and reformulated in \cite{Ghioca:Tucker:Zhang} and \cite{Ghioca:Tucker:DMM}.  Roughly speaking, if $\dim S = 0$, a subvariety $X$ of $\bP^N$ should contain a Zariski-dense set of preperiodic points if and only if it is itself preperiodic (or is preperiodic for an endomorphism that commutes with $\Phi$).  For $\dim S>0$, we will see that the variety $\cX \subset S \times \bP^N$ needs only be ``big enough" in a preperiodic subvariety.

To make a precise conjecture, we employ the notion of $\Phi$-special subvarieties introduced by Ghioca and Tucker in \cite{Ghioca:Tucker:DMM}.  We say that an irreducible subvariety $\mathcal{Y}\subset S \times\bP^N$, which is flat over a Zariski-open subset of $S$ and Zariski-closed in $S\times \bP^N$, is {\bf $\Phi$-special} if there exist a subvariety $\mathbf{Z}\subset \mathbf{P^N}$ over the algebraic closure $\overline{\C(S)}$ containing the generic fiber ${\bf Y}$ of $\cY$, a polarizable endomorphism $\mathbf{\Psi}:\mathbf{Z}\to \mathbf{Z}$, and an integer $n\in\bN$ so that the following hold: 
\begin{itemize}
\item $\mathbf{\Phi}^n(\mathbf{Z}) = \mathbf{Z}$;
\item $\mathbf{\Phi}^n\circ\mathbf{\Psi}=\mathbf{\Psi}\circ\mathbf{\Phi}^n$ on $\mathbf{Z}$; and
\item $\mathbf{Y}$ is preperiodic for $\mathbf{\Psi}$. 
\end{itemize} 
An endomorphism $\Psi$ of a projective variety $Z$ (over any field of characteristic 0) is {\bf polarizable} if there exist an ample line bundle $L$ on $Z$ and integer $q>1$ so that $\Psi^* L \iso L^q$.  Polarizable is equivalent to being the restriction of an endomorphism $\tilde{\Psi}: \bP^N\to \bP^N$ to an invariant subvariety, where the embedding $Z \hookrightarrow \bP^N$ may be defined by a power of $L$ and $\Psi = \tilde{\Psi}|_Z$; see, for example,  \cite{Fakhruddin:selfmaps} \cite{Meng:Zhang:polarized} \cite{Meng:Zhang:normal} for more on polarized endomorphisms.

Let $r_{\Phi, \mathcal{X}}$ denote the {\bf relative special dimension} of $\cX$ over $S$; that is, 
$$r_{\Phi,\mathcal{X}}:=\min\{\dim_S \mathcal{Y} \; : \; \mathcal{Y}~\text{ is }\Phi\text{-special and }\mathcal{X}\subset \mathcal{Y}\}$$
is the minimal relative dimension over $S$ of a $\Phi$-special subvariety containing $\mathcal{X}$, where  
	$$\dim_S \cY = \dim \cY - \dim S$$
is the dimension of a general fiber of the projection to $S$.  Note that $0 \leq \dim_S \cX \leq r_{\Phi, \cX} \leq N$. We remark that there is not necessarily a ``smallest $\Phi$-special subvariety containing $\cX$", in contrast with the setting of abelian varieties, as the intersection of $\Phi$-special subvarieties is not necessarily $\Phi$-special; see \cite[Example 3.1]{Ghioca:Tucker:DMM} for an example when $\dim S = 0$.

We need one more definition to formulate the conjecture.  The {\bf Green current} $\hat{T}_\Phi$ on $S\times \bP^N$ is defined as follows; see, for example, \cite[\S2.3]{Gauthier:Vigny}.  Let $\omega$ be the Fubini-Study form on $\bP^N$, and let $\hat{\omega}$ be the smooth $(1,1)$-form on $S \times\bP^N$ defined by pulling back $\omega$ via the projection $S \times \bP^N \to \bP^N$.  Then the sequence $d^{-n}(\Phi^n)^{*}(\hat{\omega})$ converges weakly to the closed positive $(1,1)$-current $\hat{T}_{\Phi}$ on $S\times\bP^N$; the potentials converge locally uniformly.  

\begin{conjecture}\label{characterization}
Let $\Phi: S\times\bP^N\to S \times\bP^N$ be an algebraic family of morphisms of degree $> 1$, and let $\mathcal{X}\subset S\times\bP^N$ be a complex, irreducible subvariety which is flat over $S$. The following are equivalent. 
\begin{enumerate}
\item $\mathcal{X}$ contains a Zariski-dense set of $\Phi$-preperiodic points. 
\item  
$\hat{T}_{\Phi}^{\wedge r_{\Phi,\mathcal{X}}}\wedge [\mathcal{X}]\not= 0$ for the relative special dimension $r_{\Phi, \cX}$.
\end{enumerate}
\end{conjecture}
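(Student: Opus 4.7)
I aim to prove the direction $(1)\Rightarrow(2)$. The converse contains the Dynamical Manin-Mumford Conjecture (the case $\dim S=0$) and, in the abelian setting, the Relative Manin-Mumford theorem of Gao-Habegger, so it appears far out of reach in this generality. Assume $\mathcal{X}$ carries a Zariski-dense set of $\Phi$-preperiodic points. The first step is to fix a $\Phi$-special subvariety $\mathcal{Y}\supset\mathcal{X}$ realizing the minimum $r:=r_{\Phi,\mathcal{X}}$; such a $\mathcal{Y}$ exists because $S\times\mathbb{P}^N$ itself is trivially $\Phi$-special (take $\mathbf{\Psi}=\mathbf{\Phi}$), so the minimum in the definition is attained. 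Let $\mathbf{Z}\supset\mathbf{Y}$, an integer $n$, and the polarizable commuting endomorphism $\mathbf{\Psi}$ witness that $\mathcal{Y}$ is $\Phi$-special, with $\mathbf{Y}$ preperiodic under $\mathbf{\Psi}$.

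The key geometric input is to identify $\hat{T}_\Phi^{\wedge r}\wedge[\mathcal{Y}]$ as a non-trivial positive closed current. A standard uniqueness argument (Forn\ae ss--Sibony, Dinh--Sibony) for $\mathbf{\Phi}^n$-invariant positive closed currents in a fixed cohomology class, combined with the commutation $\mathbf{\Phi}^n\circ\mathbf{\Psi}=\mathbf{\Psi}\circ\mathbf{\Phi}^n$ on $\mathbf{Z}$, yields $\mathbf{\Psi}^*\hat{T}_{\mathbf{\Phi}^n}=(\deg\mathbf{\Psi})\cdot\hat{T}_{\mathbf{\Phi}^n}$ on $\mathbf{Z}$; hence the Green currents of $\mathbf{\Phi}$ and $\mathbf{\Psi}$ coincide on $\mathbf{Z}$ after normalization. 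Since $\mathbf{Y}$ is $\mathbf{\Psi}$-preperiodic of dimension $r$, passing to an iterate of $\mathbf{\Psi}$ fixing $\mathbf{Y}$ identifies $\hat{T}_\Phi^{\wedge r}\wedge[\mathbf{Y}]$ with the canonical Monge--Amp\`ere measure of $\mathbf{\Psi}|_\mathbf{Y}$, which is non-zero. Spreading from the generic fiber back over $S$, this produces a non-trivial positive closed current on $S\times\mathbb{P}^N$ supported on $\mathcal{Y}$.

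The heart of the argument is to promote this non-vanishing from $[\mathcal{Y}]$ to $[\mathcal{X}]$. I would invoke a family version of arithmetic or dynamical equidistribution --- Yuan's theorem over the function field $\overline{\mathbb{C}(S)}$, or parametric Dinh--Sibony equidistribution for holomorphic families --- applied to the sequence of $\Phi$-preperiodic points in $\mathcal{X}$ provided by hypothesis $(1)$. These points have $\hat{h}_\Phi$-height zero and are Zariski-dense in $\mathbf{X}$, so they must equidistribute to the restriction to $\mathbf{X}$ of the fiberwise equilibrium measure of $\mathbf{\Psi}|_\mathbf{Y}$, forcing $\mathbf{X}$ to carry positive mass of that measure. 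Slicing theory for positive closed currents (Bedford--Taylor, Demailly) then converts this into $\hat{T}_\Phi^{\wedge r}\wedge[\mathcal{X}]\neq 0$.

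The principal obstacles are two-fold. First, $\mathcal{Y}$ is not canonical, since intersections of $\Phi$-specials need not be $\Phi$-special (as stressed in the introduction), so the argument must produce the wedge non-vanishing independently of the particular choice of minimal $\mathcal{Y}$ and associated $\mathbf{\Psi}$. Second, passing from the fiberwise positivity of the equilibrium measure to the global non-vanishing of the wedge product requires continuity of local plurisubharmonic potentials of $\hat{T}_\Phi$ along $\mathcal{X}$ and careful Monge--Amp\`ere intersection theory near the accumulation points of the preperiodic sequence; verifying that no Lelong-type cancellation occurs, uniformly over $S$, is where I expect the argument to demand the most care.
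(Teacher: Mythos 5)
You have the two directions of the conjecture reversed, and this undercuts the whole attempt. The implication you set out to prove, $(1)\Rightarrow(2)$, is precisely the hard, conjectural direction: when $\dim S=0$ condition $(2)$ reduces (by Lemma~\ref{rank equality}) to ``$\cX$ is $\Phi$-special,'' so $(1)\Rightarrow(2)$ \emph{is} the Dynamical Manin--Mumford Conjecture, and in the abelian case it is Gao--Habegger's Relative Manin--Mumford theorem. You wrote that ``the converse contains the Dynamical Manin-Mumford Conjecture \dots\ so it appears far out of reach,'' but that is exactly backwards: DMM and RMM are instances of the implication you claim to prove, not its converse. The paper only proves the other implication, $(2)\Rightarrow(1)$, stated as Theorem~\ref{easy} --- and calls it that precisely because $(1)\Rightarrow(2)$ is wide open.

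Beyond the mix-up of directions, the core step of your sketch is circular. You propose to ``invoke a family version of arithmetic or dynamical equidistribution --- Yuan's theorem over the function field \dots'' to force the preperiodic points in $\cX$ to equidistribute toward the fiberwise equilibrium measure of $\Psi|_{\cY}$, and then conclude $\hat{T}_\Phi^{\wedge r}\wedge[\cX]\neq 0$. But the equidistribution theorems of Yuan--Zhang in the quasiprojective/function-field setting have a non-degeneracy hypothesis on $\cX$, which is exactly the kind of wedge non-vanishing you are trying to establish (see \S\ref{non-degeneracy}). You cannot use equidistribution to prove the hypothesis it needs. Even when $\dim S=0$, where Briend--Duval-type equidistribution applies unconditionally, the preperiodic points in $X$ equidistribute to $T_f^{\wedge\dim X}\wedge[X]$, which is always non-zero (Lemma~\ref{rank equality}) and therefore gives no information about whether $X$ is special --- that is the entire content of DMM. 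Finally, the claim that they ``must equidistribute to the restriction to $\mathbf{X}$ of the fiberwise equilibrium measure of $\mathbf{\Psi}|_{\mathbf Y}$'' is not what any equidistribution theorem says; if $\dim\cX<\dim_S\cY$ the literal restriction of an $r$-dimensional Monge--Amp\`ere measure to $\cX$ is zero.

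For contrast, the paper's proof of Theorem~\ref{easy} (the implication $(2)\Rightarrow(1)$) is in Section~\ref{easy section} and runs in the opposite logical direction, using a genuinely dynamical mechanism rather than height equidistribution. One fixes a minimal $\Phi$-special $\cY$ fixed by $\Phi$ (after iterating), takes a point $\lambda_0$ in the support of $\pi_*(\hat T_\Phi^{\wedge r}\wedge[\cX])$, chooses a hyperbolic set $K_0\subset\cY_{\lambda_0}$ of maximal entropy with a PLB measure $\nu$ that moves holomorphically over a neighborhood $U$, and forms the uniformly laminar $(r,r)$-current $\hat\nu=\int_{K_0}[\Gamma_z]\,d\nu$. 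The convergence $d^{-nr}(\Phi^n)^*\hat\nu\to\hat T_\Phi^{\wedge r}$ together with condition $(2)$ forces $\hat\nu\wedge[\Phi^n(\cX)]\neq0$ for large $n$, and by Dujardin's slicing theorem this intersection is transverse along a positive-$\nu$-measure set of graphs $\Gamma_z$. Since repelling periodic points are dense in $K_0$, the iterated images of $\cX$ hit infinitely many such graphs, producing a Zariski-dense set of $\Phi$-preperiodic points in $\cX$. This is the Dujardin/Berteloot--Bianchi--Dupont/Gauthier machinery, and it has no analogue in your proposal.
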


Here, by convention, $\hat{T}^{\wedge 0} = 1$.  Note that if $\cY \subset S\times \bP^N$ is a $\Phi$-special subvariety which is invariant for an endomorphism $\Psi$ that commutes with an iterate of $\Phi$, then $\hat{T}_\Phi = \hat{T}_\Psi$ on $\cY$ because of the commuting relation.  Moreover, the slices of $\hat{T}_{\Phi}^{\wedge \dim_S\cY}$ in fibers of $\cY\to S$ are the measures of maximal entropy for the restriction of $\Psi$; see, for example, \cite{Dinh:Sibony:dynamics}.  So condition (2) of Conjecture \ref{characterization} means that (an iterate of) $\cX$ intersects these families of measures nontrivially.  We point out here that the non-vanishing of $\hat{T}_{\Phi}^{\wedge r_{\Phi,\mathcal{X}}}\wedge [\mathcal{X}]$ can also be seen from an arithmetic viewpoint in the theory of adelic metrized line bundles on quasi-projective varieties developed by Yuan--Zhang \cite{Yuan:Zhang:quasiprojective}. In the notation of \cite{Guo} and \cite{Yuan:Zhang:quasiprojective}, it is equivalent to the non-vanishing of an intersection number $\widetilde{L_\Phi}^{r_{\Phi,\mathcal{X}}}|_{X}\cdot \overline{H}^{\dim \mathcal{X}-r_{\Phi,\mathcal{X}}}$ for some $\overline{H}\in\widetilde{\mathrm{Pic}}(X/\mathbb{C})$ \cite[Theorem 1.2]{Guo}. 

Conjecture \ref{characterization} is a generalization of the recent theorem of Gao and Habegger on families of abelian varieties \cite[Theorem 1.3]{Gao:Habegger:RMM}, when $\Phi$ preserves an abelian scheme $\mathcal{A}$ over $S$, inducing a homomorphism, and $\mathcal{X}$ is contained in $\mathcal{A}$.  The preperiodic points of $\Phi$ in $\mathcal{A}$ coincide with the torsion points for the group structure.  The current $\hat{T}_{\Phi}$ restricts to a Betti form on $\mathcal{A}$, and condition (2) of Conjecture \ref{characterization} is equivalent to saying that $\mathcal{X}$ has maximal Betti rank; see Section \ref{Betti} for details.  

\begin{remark}
We note that Conjecture \ref{characterization} could be formulated verbatim for general families of polarized endomorphisms; see \cite[\S2.3]{Gauthier:Vigny} for a construction of the fibered Green current in this setting.  We chose to keep the presentation concrete, since the seemingly more general statement reduces to the one given here by Fakhruddin's observation in \cite{Fakhruddin:selfmaps}, which shows that any polarized endomorphism extends to an endomorphism of a projective space.
\end{remark}

Conjecture \ref{characterization} is known to hold when $N=1$, because it follows from \cite[Theorem 1.1]{D:stableheight}, as we explain in Section \ref{known}.  In this article, we show that many existing works and conjectures can be viewed as special cases of Conjecture \ref{characterization}, sometimes in surprising ways.  In fact, we will see that many powerful statements follow from a weaker form of Conjecture \ref{characterization}: 

\begin{conjecture}\label{dimension}
Let $\Phi: S\times\bP^N\to S \times\bP^N$ be an algebraic family of morphisms of degree $> 1$, and let $\mathcal{X}\subset S\times\bP^N$ be an irreducible, flat family of subvarieties over $S$ containing a Zariski-dense set of preperiodic points of $\Phi$.  Then $\mathcal{X}$ has codimension $\le\dim S$ in a $\Phi$-special subvariety. 
\end{conjecture}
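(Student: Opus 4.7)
My strategy is to prove the stronger implication (1)$\,\Rightarrow\,$(2) of Conjecture~\ref{characterization} and then deduce Conjecture~\ref{dimension} as a formal bidegree consequence. I expect (1)$\,\Rightarrow\,$(2) to be the implication established in this paper, as it is the tractable direction via equidistribution of small points, whereas (2)$\,\Rightarrow\,$(1) is a relative Dynamical Manin--Mumford-type statement.

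Granting (1)$\,\Rightarrow\,$(2), the reduction is the following bidegree count. If $\mathcal{X}$ contains a Zariski-dense set of preperiodic points, then $\hat{T}_{\Phi}^{\wedge r_{\Phi,\mathcal{X}}}\wedge[\mathcal{X}] \neq 0$ on $S\times\bP^N$. The current $[\mathcal{X}]$ has bidegree $(\dim S+N-\dim\mathcal{X},\,\dim S+N-\dim\mathcal{X})$, and $\hat{T}_{\Phi}^{\wedge r_{\Phi,\mathcal{X}}}$ has bidegree $(r_{\Phi,\mathcal{X}},r_{\Phi,\mathcal{X}})$; a nonzero $(p,p)$-current on a complex manifold of dimension $d=\dim S+N$ requires $p\le d$, so the non-vanishing forces
\[
r_{\Phi,\mathcal{X}} \;\le\; \dim\mathcal{X}.
\]
Choosing a $\Phi$-special $\mathcal{Y}\supset\mathcal{X}$ realizing $\dim_S\mathcal{Y}=r_{\Phi,\mathcal{X}}$ yields $\dim\mathcal{Y}-\dim\mathcal{X}=r_{\Phi,\mathcal{X}}+\dim S-\dim\mathcal{X}\le\dim S$, which is the required codimension bound.

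For the implication (1)$\,\Rightarrow\,$(2) itself, my plan is to use relative arithmetic equidistribution on quasi-projective bases, in the spirit of the Yuan--Zhang framework cited in the introduction. After spreading out over a number field, the Zariski-dense preperiodic points on $\mathcal{X}$ have vanishing canonical height for $\Phi$; restricted to a minimal $\Phi$-special $\mathcal{Y}\supset\mathcal{X}$, this height coincides with the canonical height of a commuting polarizable endomorphism $\Psi$. Equidistribution should force these points to accumulate on the family of fiberwise measures of maximal entropy for $\Psi$, which are precisely the fiber slices of $\hat{T}_{\Phi}^{\wedge r_{\Phi,\mathcal{X}}}|_{\mathcal{Y}}$; because the points lie in $\mathcal{X}\subset\mathcal{Y}$, this accumulation forces $\hat{T}_{\Phi}^{\wedge r_{\Phi,\mathcal{X}}}\wedge[\mathcal{X}]\ne 0$.

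The main obstacle is carrying out the equidistribution argument in the full relative setting: one needs a version of Yuan--Zhang equidistribution valid over the quasi-projective base $S$ and compatible with the family of canonical currents on the (possibly singular) $\Phi$-special $\mathcal{Y}$. A secondary difficulty, flagged in the paper after the definition of $r_{\Phi,\mathcal{X}}$, is that the smallest $\Phi$-special containing $\mathcal{X}$ need not be unique, so one must fix an arbitrary minimizer $\mathcal{Y}$ and produce the non-vanishing along that choice.
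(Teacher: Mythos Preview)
Your formal reduction of Conjecture~\ref{dimension} to the implication (1)$\Rightarrow$(2) of Conjecture~\ref{characterization} is correct and matches the paper's one-line observation: non-vanishing of $\hat{T}_\Phi^{\wedge r_{\Phi,\mathcal{X}}}\wedge[\mathcal{X}]$ forces $\dim\mathcal{X}\ge r_{\Phi,\mathcal{X}}$, hence $\mathcal{X}$ has codimension $\le\dim S$ in a $\Phi$-special $\mathcal{Y}$ realizing $r_{\Phi,\mathcal{X}}$.

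However, you have the two implications of Conjecture~\ref{characterization} reversed. The implication proved in the paper (Theorem~\ref{easy}) is (2)$\Rightarrow$(1), not (1)$\Rightarrow$(2). The direction you call ``a relative Dynamical Manin--Mumford-type statement'' is in fact (1)$\Rightarrow$(2): when $\dim S=0$ it specializes exactly to the Dynamical Manin--Mumford Conjecture (see \S\ref{DMM}), and it remains widely open. Consequently Conjecture~\ref{dimension} is not proved in the paper; it is recorded as a conjecture and shown only to follow from the open implication (1)$\Rightarrow$(2).

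Your proposed equidistribution route to (1)$\Rightarrow$(2) does not work as stated. Yuan--Zhang equidistribution on quasi-projective varieties requires the non-degeneracy hypothesis $\hat{T}_\Phi^{\wedge\dim\mathcal{X}}\wedge[\mathcal{X}]\neq 0$ (cf.\ \S\ref{non-degeneracy}), which is at least as strong as the conclusion you want; without it there is no limiting measure to equidistribute towards. Moreover, even granting some equidistribution on a $\Phi$-special $\mathcal{Y}$, the limit measure lives on the support of $\hat{T}_\Phi^{\wedge r_{\Phi,\mathcal{X}}}$ in $\mathcal{Y}$, and knowing that a generic sequence of points of $\mathcal{X}$ accumulates there does not by itself force $\hat{T}_\Phi^{\wedge r_{\Phi,\mathcal{X}}}\wedge[\mathcal{X}]\neq 0$; the support of the slice measures could lie in $\mathcal{X}$ while the wedge still vanishes for dimension reasons. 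This is precisely the gap that makes (1)$\Rightarrow$(2) hard and leaves Conjecture~\ref{dimension} open.
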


\noindent 
Conjecture \ref{dimension} is easily obtained from Conjecture \ref{characterization}, because $\hat{T}_{\Phi}^{\wedge r_{\Phi,\mathcal{X}}}\wedge [\mathcal{X}]\not= 0$ implies that $\dim \mathcal{X} \geq r_{{\Phi,\mathcal{X}}} = \dim \mathcal{Y} - \dim S$ for a $\Phi$-special subvariety $\mathcal{Y}$ containing $\mathcal{X}$. A special case of Conjecture \ref{dimension} in a dynamical setting was proposed as \cite[Conjecture 1.9]{Mavraki:Schmidt}; see also \cite[Theorem 1.8]{Mavraki:Schmidt}.

Gao and Habegger pointed out in \cite{Gao:Habegger:RMM} that the converse assertion to Conjecture \ref{dimension} is not true, with explicit examples and constructions in the setting where $\Phi$ fixes an abelian subscheme of $S \times\bP^N$; see also \cite{Gao:generic}.  This led them to formulate their \cite[Theorem 1.3]{Gao:Habegger:RMM}, which is what our Conjecture \ref{characterization} aims to extend.   Some of those counterexample constructions arose already in the (conjectural) characterization of subvarieties with geometric canonical height 0, in the Geometric Dynamical Bogomolov Conjecture formulated by Gauthier and Vigny \cite{Gauthier:Vigny}.  The conjecture in \cite{Gauthier:Vigny} aims to extend theorems for abelian varieties over function fields of characteristic 0, proved in \cite{Gao:Habegger:Bog, CGHX}.  We discuss the relation between \cite[Conjecture 1.9]{Gauthier:Vigny} and Conjecture \ref{characterization} in Section \ref{stability}.

\begin{remark}\label{small}  
In the case that both $\Phi$ and $\mathcal{X}$ are defined over $\Qbar$, we expect Conjectures \ref{characterization} and \ref{dimension} to remain true upon replacing $\Phi$-preperiodic points by $\Phi$-small points, in the spirit of the Bogomolov Conjecture over number fields. Here a sequence of points $\{x_n\}_{n}\subset \mathcal{X}(\Qbar)$ is called $\Phi$-small if $\hat{h}_{\Phi}(x_n)\to 0$, where $\hat{h}_{\Phi}(x):=\hat{h}_{\Phi_{\pi(x)}}(x)$ is the fiber-wise Call--Silverman canonical height over $\Qbar$ introduced in \cite{Call:Silverman}, and $\pi: S\times \bP^N \to S$ is the projection. This would generalize the Relative Bogomolov Conjecture in \cite{DGH:rbc}.
\end{remark}

We conclude this article with a proof of the following implication.

\begin{theorem} \label{easy}
Condition (2) implies condition (1) of Conjecture \ref{characterization}.
\end{theorem}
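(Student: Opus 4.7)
The plan is to reduce condition (2) to a bigness statement in the Yuan--Zhang theory of adelic line bundles on quasiprojective varieties, and then to extract a Zariski-dense set of preperiodic points from the Yuan--Zhang density-of-small-points theorem.

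First I would fix a $\Phi$-special subvariety $\mathcal{Y}\subset S\times\bP^N$ with $\mathcal{X}\subset\mathcal{Y}$ realizing $r:=r_{\Phi,\mathcal{X}}=\dim_S\mathcal{Y}$, together with the polarizable endomorphism $\mathbf{\Psi}:\mathbf{Z}\to\mathbf{Z}$ and integer $n\geq 1$ from the definition of $\Phi$-special. Replacing $\Phi$ with $\Phi^n$ (harmless, since $\Prep(\Phi)=\Prep(\Phi^n)$) and $\mathbf{\Psi}$ with an appropriate iterate, I may assume that $\mathbf{Y}$ is $\mathbf{\Psi}$-invariant and that $\mathbf{\Psi}$ commutes with $\mathbf{\Phi}$ on $\mathbf{Z}$. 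After a finite base change of $S$, I can spread $\mathbf{\Psi}$ out to a morphism of a flat family $\mathcal{Z}\to U$ over a Zariski-open subset $U$ of (a finite cover of) $S$, with polarization extending that on the generic fiber and with $\mathcal{Y}$ invariant. The identification $\hat T_\Phi|_{\mathcal{Y}}=\hat T_{\mathbf{\Psi}}|_{\mathcal{Y}}$ recalled after Conjecture~\ref{characterization} then reduces (2) to a statement about the polarized family $(\mathcal{Z},\mathbf{\Psi})$.

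The key step is to pass to Yuan--Zhang's arithmetic picture. The family $(\mathcal{Z},\mathbf{\Psi})$ carries a canonical adelically metrized line bundle $\overline{L}_{\mathbf{\Psi}}$ in the sense of \cite{Yuan:Zhang:quasiprojective} whose first Chern current is $\hat T_{\mathbf{\Psi}}$. By \cite[Theorem 1.2]{Guo}, condition (2) is equivalent to the existence of a class $\overline{H}\in\widetilde{\mathrm{Pic}}(\mathcal{X}/\C)$ with $\tilde L_{\mathbf{\Psi}}^{\wedge r}|_{\mathcal{X}}\cdot\overline{H}^{\dim\mathcal{X}-r}>0$, which is precisely the bigness of $\overline{L}_{\mathbf{\Psi}}|_{\mathcal{X}}$ in the Yuan--Zhang sense. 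I would then invoke the Yuan--Zhang generalization of Yuan's ``arithmetically big implies a Zariski-dense set of small points'' theorem to produce a Zariski-dense set of points of $\mathcal{X}$ whose canonical $\overline{L}_{\mathbf{\Psi}}$-height vanishes, i.e., $\mathbf{\Psi}$-preperiodic points. Since $\mathbf{\Psi}$ and $\mathbf{\Phi}$ are commuting polarized endomorphisms of $\mathbf{Z}$, their preperiodic sets coincide, so $\mathcal{X}$ contains a Zariski-dense set of $\Phi$-preperiodic points.

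The main obstacle I foresee is the careful invocation of the Yuan--Zhang density theorem in the quasiprojective complex setting rather than the more familiar arithmetic one. Concretely, one has to choose a suitable projective compactification and integral model for $\mathcal{Z}\to U$, verify that the invariant metric on $\overline{L}_{\mathbf{\Psi}}$ extends with the integrability and nefness required by the theory, and confirm that the non-vanishing of the complex current detected by Guo's criterion genuinely matches the arithmetic bigness hypothesis of the density theorem; bridging the complex-analytic and arithmetic-intersection-theoretic viewpoints is the technical heart of the argument. Once this translation is established, descending from the finite base change of $S$ and invoking the commuting-polarizable equality $\Prep(\mathbf{\Psi})=\Prep(\mathbf{\Phi})$ on $\mathbf{Z}$ are routine.
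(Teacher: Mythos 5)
Your proposal takes a genuinely different route from the paper, but it contains a serious gap at the critical step.

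The paper's proof (Section 6) is complex-analytic, following Dujardin, Berteloot--Bianchi--Dupont, and Gauthier: one picks a parameter $\lambda_0$ in the support of $\pi_*T$, chooses a hyperbolic set $K_0$ moving holomorphically over a neighborhood and supporting a PLB measure $\nu$ of maximal entropy, builds the uniformly laminar current $\hat\nu = \int_{K_0}[\Gamma_z]\,d\nu$, and shows that $\hat\nu\wedge[\Phi^n(\cX)]\neq 0$ for $n$ large. Since repelling cycles are dense in $K_0$ and the intersections are transverse, one gets actual $\C$-valued periodic points in $\cX_n$, which are Zariski dense because $\nu$ charges no proper analytic subsets.

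The gap in your proposal is the invocation of a ``Yuan--Zhang generalization of Yuan's arithmetically-big-implies-dense-small-points theorem'' to produce points ``whose canonical $\overline L_{\mathbf\Psi}$-height vanishes, i.e.\ $\mathbf\Psi$-preperiodic points.'' No such theorem is available. First, the theorem is stated and must be proved over $\C$, where the Call--Silverman fiberwise height $\hat h_\Phi(x)=\hat h_{\Phi_{\pi(x)}}(x)$ is not even defined; Remark~1.4 of the paper explicitly restricts the reformulation in terms of small points to the case where $\Phi$ and $\cX$ are defined over $\Qbar$. Second, even over $\Qbar$, Yuan-type theorems (bigness bounds the first successive minimum by the normalized volume) produce a generic sequence of points of height \emph{tending to} $0$, not points of height \emph{exactly} $0$. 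The passage from ``Zariski-dense set of small points'' to ``Zariski-dense set of preperiodic points'' is precisely a Dynamical Bogomolov-type statement, i.e.\ a special case of what one is trying to prove, not something you can cite. The Yuan--Zhang result the paper actually cites, \cite[Theorem 5.4.3]{Yuan:Zhang:quasiprojective}, is an equidistribution statement \emph{conditional on} the existence of a generic small sequence (``if the points exist''); it does not produce such a sequence from bigness. Similarly, \cite[Theorem 1.2]{Guo} only gives the equivalence of the current condition with an arithmetic intersection inequality; it does not manufacture points. The preliminary reductions in your proposal (replacing $\Phi$ by an iterate, arranging $\cY$ to be $\Psi$-invariant, using $\hat T_\Phi|_{\cY}=\hat T_\Psi|_{\cY}$, identifying $\Prep(\mathbf\Psi)=\Prep(\mathbf\Phi)$ on $\mathbf Z$) all agree with the paper's setup; the missing ingredient is a mechanism, over $\C$, to turn positivity of the current $T$ into actual preperiodic $\C$-points, which the paper supplies by the hyperbolic-set/laminar-current argument.
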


\noindent 
The proof of Theorem \ref{easy} is based on the methods of Dujardin \cite{Dujardin:higherbif}, Berteloot-Bianchi-Dupont \cite{Berteloot:Bianchi:Dupont}, and Gauthier \cite{Gauthier:abscont} to study supports of bifurcation currents and measures.  For endomorphisms of abelian varieties, the implication (2) $\implies$ (1) was straightforward and observed in \cite{ACZ:Betti}, as we explain in Section \ref{Betti}.

\bigskip\noindent
{\bf Contents of this article.}   We begin Section \ref{Betti} by introducing a dynamical notion of $\Phi$-rank of $\cX$, and we compare Conjecture \ref{characterization} to the theorems of Gao and Habbeger from \cite{Gao:Habegger:RMM}.  In particular, Conjecture \ref{characterization} characterizes subvarieties $\cX \subset S \times \bP^N$ of maximal $\Phi$-rank; see Conjecture \ref{max rank}.  We also compare the conditions of Conjecture \ref{characterization} to the concept of non-degeneracy of subvarieties $\cX$, which was originally introduced for families of abelian varieties.  We then provide a brief survey in Section \ref{known} of familiar cases of Conjecture \ref{characterization}, outside of the setting of abelian schemes.  In particular, we remind the reader of the Dynamical Manin-Mumford Conjecture, first posed in \cite{Zhang:distributions}, which corresponds to Conjecture \ref{characterization} in the case where $\dim S = 0$.  We show that Conjecture \ref{characterization} is known to hold in dimension $N=1$ over any base $S$ (and is in fact equivalent to \cite[Theorem 1.1]{D:stableheight}), and we relate it to the concept of $J$-stability for maps on $\bP^1$.  We then show that a conjecture of \cite{BD:polyPCF}, and the recent classification of ``special curves" in the moduli space $\M_d^1$ of maps on $\bP^1$ in \cite{Ji:Xie:DAO}, is a special case of Conjecture \ref{characterization}; see Theorem \ref{DAO implication}.  Moreover, we illustrate the strength of Conjecture \ref{characterization} with an example showing that uniform versions of the conjecture are consequences of the conjecture itself; the example we provide in Proposition \ref{uniform shared} comes from the study of shared preperiodic points for distinct maps on $\bP^1$.  In Section \ref{further}, we show that Conjecture \ref{characterization} implies the recent sparsity theorem of Gauthier, Taflin, and Vigny in \cite{Gauthier:Taflin:Vigny} about PCF maps in the moduli spaces $\M_d^N$ of maps on $\bP^N$, for $N>1$.  We present Conjecture \ref{sparsity} as a special case of Conjecture \ref{characterization} that extends the sparsity result of \cite{Gauthier:Taflin:Vigny} to more general families of subvarieties in $\bP^N$.  We then explore the concept of minimal $\Phi$-rank and compare Conjecture \ref{characterization} to the Geometric Bogomolov Conjecture posed in \cite{Gauthier:Vigny} in Section \ref{stability}.  Finally, in Section \ref{easy section}, we provide the proof of Theorem \ref{easy}.


\bigskip\noindent{\bf Acknowledgements.}  We would like to thank Ziyang Gao, Thomas Gauthier, Lars K\"uhne, Harry Schmidt, and Gabriel Vigny for many interesting discussions and their help during the preparation of this article.  We thank the anonymous referees for helpful suggestions.  We also thank the Simons Foundation for their support during a Symposium in August 2022 where this work was initiated. This project was supported in part with funding from the National Science Foundation, the Radcliffe Institute for Advanced Study, and the Natural Sciences and Engineering Research Council of Canada.

\bigskip
\section{Dynamical rank and Betti rank}
\label{Betti}

In this section, we compare Conjecture \ref{characterization} with the theorem of Gao and Habbeger \cite[Theorem 1.3]{Gao:Habegger:RMM}, stated below as Theorem \ref{GH theorem}.  In their setting of abelian varieties, the case of $\dim S = 0$ reduces to the original Manin-Mumford Conjecture proved by Raynaud \cite{Raynaud:2}.  We also introduce a dynamical notion of rank that extends the notion of Betti rank from \cite{ACZ:Betti}.  Conjecture \ref{max rank}, which is equivalent to Conjecture \ref{characterization}, is a characterization of subvarieties of $S\times\bP^N$ with maximal $\Phi$-rank. 

\subsection{$\Phi$-Rank}
Let $\Phi: S\times\bP^N\to S \times\bP^N$ be an algebraic family of morphisms of degree $> 1$, and let $\mathcal{X}\subset S \times \bP^N$ be an irreducible flat subvariety over $S$.  We define the {\bf $\Phi$-rank} of $\mathcal{X}$ to be
$$\rank_{\Phi}(\mathcal{X}):=\max\{r \geq 0~:~\hat{T}_{\Phi}^{\wedge r}\wedge [\mathcal{X}]\not= 0\}.$$
It is clear from the definition that $\rank_{\Phi}(\mathcal{X}) \le \dim\mathcal{X}$.

\begin{lemma} \label{rank equality}
For $\dim S = 0$, so that $\Phi$ is a single endomorphism $f: \bP^N \to \bP^N$ defined over $\C$, and for any  irreducible subvariety $Z$ of $\bP^N$, we have
	$$\rank_f(Z)  = \dim Z.$$
\end{lemma}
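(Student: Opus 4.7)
The plan is to establish both inequalities separately. For the upper bound $\rank_f(Z) \le \dim Z$, I would note that $\hat{T}_f$ is a closed positive $(1,1)$-current on $\bP^N$, so for any integer $r$ the wedge product $\hat{T}_f^{\wedge r}\wedge [Z]$ (once defined) has bidimension $(\dim Z - r, \dim Z - r)$. When $r>\dim Z$ this is impossible, so the current vanishes; this already gives us half of the claim.

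For the lower bound $\rank_f(Z) \ge \dim Z$, the heart of the argument is to show that $\hat{T}_f^{\wedge \dim Z}\wedge [Z]$ is a nonzero (positive) measure. First I would recall from the construction in \S2.3 of \cite{Gauthier:Vigny} (applied to $\dim S=0$) that $\hat{T}_f = \omega + dd^c u$ for a continuous (in fact H\"older) function $u$ on $\bP^N$, since the approximants $d^{-n}(f^n)^*\omega$ are cohomologous to $\omega$ and their potentials converge locally uniformly. Because $\hat{T}_f$ has continuous local potentials, Bedford--Taylor theory ensures that $\hat{T}_f^{\wedge k}$ is well-defined for every $k \le N$ as a closed positive current; and the wedge $\hat{T}_f^{\wedge \dim Z}\wedge [Z]$ is then well-defined as well, e.g.\ by pulling back to a resolution $\pi:\widetilde Z\to Z$ (since $\pi^* \hat{T}_f$ still has continuous potentials on $\widetilde Z$) and pushing forward $(\pi^*\hat{T}_f)^{\wedge \dim Z}$.

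The mass calculation then proceeds purely via cohomology. On the projective variety $\widetilde Z$, the cohomology class of $\pi^*\hat{T}_f$ equals that of $\pi^*\omega$, so the total mass of $(\pi^*\hat{T}_f)^{\wedge \dim Z}$ equals $\int_{\widetilde Z}(\pi^*\omega)^{\wedge \dim Z} = \deg Z > 0$. Pushing forward to $\bP^N$ preserves total mass, hence
\[
\int_{\bP^N} \hat{T}_f^{\wedge \dim Z}\wedge [Z] = \deg Z > 0,
\]
so in particular this positive measure is nonzero. This gives $\rank_f(Z)\ge \dim Z$ and finishes the proof.

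The only conceptual point that needs care is the definition of the wedge products and the cohomological invariance of mass through the limiting process; this is standard once one has continuity of potentials, so I do not anticipate any real obstacle beyond quoting the appropriate results from pluripotential theory.
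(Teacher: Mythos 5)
Your proof is correct, and it reaches the conclusion by a route that differs from the paper's in the lower-bound step. The paper works with the approximating sequence: it observes that each $d^{-\ell n}(f^n)^*\omega^{\wedge\ell}\wedge[Z]$ has the fixed positive total mass $\int_{\bP^N}\omega^{\wedge\ell}\wedge[Z]=\deg Z$ (cohomological invariance of the integrand), and then invokes local-uniform convergence of potentials (citing Demailly) to conclude that the weak limit $T_f^{\wedge\ell}\wedge[Z]$ inherits this mass and is therefore nonzero. You instead work directly with the invariant current: you note $\hat T_f$ has continuous local potentials, pull back along a resolution $\pi:\widetilde Z\to Z$, appeal to the Bedford--Taylor cohomological invariance of Monge--Amp\`ere masses on $\widetilde Z$ to compute $\int_{\widetilde Z}(\pi^*\hat T_f)^{\wedge\dim Z}=\int_{\widetilde Z}(\pi^*\omega)^{\wedge\dim Z}=\deg Z$, and push forward. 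Both routes rest on the same pluripotential-theoretic input (continuity of the Green potential and cohomological mass invariance); the paper's is arguably more economical in that it avoids invoking the definition of the Bedford--Taylor product on the resolution and never needs $Z$ to be desingularized, while yours has the merit of computing the mass of the limit measure in closed form ($=\deg Z$) rather than only asserting nonvanishing. Your upper-bound paragraph is the same dimension-count the paper uses. No gaps.
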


\begin{proof}
Let $\ell = \dim Z$. If $\ell=0$, then the statement is clear. So we may assume that $\ell>0$.  Let $\omega$ be the Fubini-Study form on $\bP^N$, representing $c_1(\mathcal{O}(1))$.  If $f$ has degree $d$, then $d^{-n}(f^n)^* \omega$ is cohomologous to $\omega$ for all $n\geq 1$.  In particular, we have 
	$$\int_{\bP^N} \left(d^{-\ell n}(f^n)^* \omega^{\wedge \ell} \right) \wedge [Z]   \; = \; \int_{\bP^N} \omega^{\wedge \ell} \wedge [Z] \; \not= \; 0$$
for all $n\geq 1$.  Because of the local-uniform convergence of the potentials of pullbacks of $\omega$ to that of the invariant current $T_f$, we have 
	$$\left(d^{-\ell n}(f^n)^* \omega^{\wedge \ell} \right) \wedge [Z] \to T_f^{\wedge \ell} \wedge [Z]$$
as $n\to \infty$  \cite[Chapter III Corollary 3.6]{Demailly:CADG}, allowing us to conclude that $T_f^{\wedge \ell} \wedge [Z] \not=0$.  On the other hand, we have $T_f^{\wedge (\ell+1)} \wedge [Z] = 0$ for dimension reasons. 
\end{proof}

We say that $\cX\subset S\times\bP^N$ has {\bf maximal $\Phi$-rank} if $\rank_\Phi(\cX) = r_{\Phi, \cX}$, the relative special dimension $r_{\Phi, \cX}$ of $\cX$, a name justified by the following proposition:

\begin{prop}\label{rank inequality}
If $\mathcal{Y}\subset S \times \bP^N$ is a $\Phi$-special subvariety, then 
$$\hat{T}_{\Phi}^{\wedge (1 + \dim_S \cY) }\wedge [\mathcal{Y}] = 0.$$
In particular, for any irreducible $\mathcal{X}\subset S\times \bP^N$ which is flat over $S$, we have 
 $$\dim_S\cX \leq  \rank_{\Phi}(\mathcal{X})\leq r_{\Phi,\mathcal{X}}.$$
\end{prop}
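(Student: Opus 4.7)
The plan is to reduce the two inequalities for $\cX$ to the first statement of the proposition, and then to prove the first statement by a fiberwise bidegree argument combined with slicing.

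For the first statement, I would fix a $\Phi$-special $\cY$ together with witnessing data $\mathbf{Z}$, $\Psi$, $n$, where $\Psi:\mathbf{Z}\to\mathbf{Z}$ is polarizable, commutes with $\Phi^n$, and $\mathbf{Y}\subset\mathbf{Z}$ is $\Psi$-preperiodic. After replacing $\mathbf{Y}$ by a suitable $\Psi$-iterate (which does not affect $\dim_S\cY$ or $\hat{T}_\Phi$) and $\Psi$ by an iterate, I may assume $\mathbf{Y}$ is $\Psi$-invariant. On $\cY$, I invoke the identification $\hat{T}_\Phi=\hat{T}_\Psi$ arising from the commuting relation, as recalled in the introduction. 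Because $\hat{T}_\Psi$ arises from local-uniform convergence of smooth forms, its local potentials are continuous, and the fiberwise restriction of these potentials gives local potentials of $T_{\Psi_s}$ on $\cY_s$. Bedford--Taylor theory then makes the wedge $\hat{T}_\Psi^{\wedge k}\wedge[\cY]$ well-defined, and the Dinh--Sibony slicing formalism identifies its slice over a generic $s\in S$ with $T_{\Psi_s}^{\wedge k}\wedge[\cY_s]$. For $k=1+\dim_S\cY$, this fiberwise slice has bidegree $(k+N-\dim_S\cY,\;k+N-\dim_S\cY)=(N+1,N+1)$ on the $N$-dimensional fiber $\{s\}\times\bP^N$, and hence vanishes. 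Slicing then forces the global current $\hat{T}_\Phi^{\wedge(1+\dim_S\cY)}\wedge[\cY]$ to vanish.

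With the first statement in hand, both inequalities for $\cX$ follow. For the lower bound $\dim_S\cX\le\rank_\Phi(\cX)$, I apply Lemma \ref{rank equality} fiberwise: for $s$ in a Zariski-open subset of $S$, the fiber $\cX_s\subset\bP^N$ is irreducible of dimension $\dim_S\cX$, and Lemma \ref{rank equality} yields $T_{\Phi_s}^{\wedge\dim_S\cX}\wedge[\cX_s]\neq 0$. Fiberwise non-vanishing on a Zariski-open set of $S$ implies that $\hat{T}_\Phi^{\wedge\dim_S\cX}\wedge[\cX]\neq 0$, so $\rank_\Phi(\cX)\ge\dim_S\cX$. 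For the upper bound $\rank_\Phi(\cX)\le r_{\Phi,\cX}$, I choose a $\Phi$-special $\cY\supset\cX$ with $\dim_S\cY=r_{\Phi,\cX}$. Since $[\cX]$ is supported in $\cY$ and $\hat{T}_\Phi=\hat{T}_\Psi$ there, the same fiberwise argument applied to the slice $T_{\Psi_s}^{\wedge(1+r_{\Phi,\cX})}\wedge[\cX_s]$ (supported in $\cY_s$, whose dimension is $r_{\Phi,\cX}$) gives fiberwise vanishing for $k>r_{\Phi,\cX}$, hence global vanishing by slicing.

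The principal technical point is the passage from fiberwise vanishing of slices to equality of the global currents on $S\times\bP^N$. This is what requires the continuous-potential regularity of $\hat{T}_\Phi$: within Bedford--Taylor theory the wedge product commutes with restriction to fibers of the smooth projection $S\times\bP^N\to S$, and closed positive currents built from $\hat{T}_\Phi$ wedged with a flat family $[\cY]$ are determined by their generic slices. Everything else is a bookkeeping exercise in bidegree counting on fibers of $\cY\to S$ together with the structural identity $\hat{T}_\Phi=\hat{T}_\Psi$ on $\cY$.
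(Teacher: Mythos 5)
The key step in your argument for the first statement is flawed, and the error would prove too much. You argue that the fiberwise slices of $\hat{T}_{\Phi}^{\wedge k}\wedge[\mathcal{Y}]$ over $S$ vanish for $k = 1 + \dim_S\mathcal{Y}$ by a bidegree count on the fibers, and then conclude that the global current vanishes. But the implication ``all slices over $S$ vanish $\implies$ the current vanishes'' is false for closed positive currents on $S\times\bP^N$: slicing only detects the component of the current in the fiber directions, and loses any components involving the base differentials. (A simple example: the smooth form $i\,ds\wedge d\bar{s}$ on $S\times\bP^N$ with $\dim S=1$ has all fiberwise slices zero but is nonzero.) In fact your argument would equally prove $\hat{T}_{\Phi}^{\wedge k}\wedge[\mathcal{X}]=0$ for \emph{any} $\mathcal{X}$ and \emph{any} $k > \dim_S\mathcal{X}$, i.e.\ that $\rank_\Phi(\mathcal{X})$ always equals $\dim_S\mathcal{X}$ --- which would make the bifurcation current, $\hat{T}_\Phi^{\wedge N}\wedge[\Crit(\Phi)]$ with $\dim_S\Crit(\Phi)=N-1$, identically zero. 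This trivializes the entire framework and contradicts, e.g., the nonvanishing of the bifurcation measure on $\M_d^1$. The bidegree count is vacuously true and cannot be promoted to a statement about the full current: the whole point of the $\Phi$-rank exceeding $\dim_S\mathcal{X}$ is exactly the contribution hidden from the slices.

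The paper's argument avoids slicing entirely for the upper bound. After arranging $\Psi(\mathcal{Y})=\mathcal{Y}$, it produces a smooth $(1,1)$-form $\omega$ on $\mathcal{Y}$ representing $c_1(\mathcal{L})$ for the polarizing line bundle, chosen so that $\omega^{\wedge(k+1)}=0$ identically on $\mathcal{Y}$ (this is the analogue of the Betti form having vanishing $(g+1)$-st power on an abelian scheme --- a nontrivial property of a specific representative, not of an arbitrary form in the class). Since $\hat{T}_\Psi^{\wedge j}$ on $\mathcal{Y}$ is the local-uniform limit of $e^{-jn}(\Psi^n)^*(\omega^{\wedge j})$, and $\omega^{\wedge(k+1)}\equiv 0$, one gets $\hat{T}_\Psi^{\wedge(k+1)}=0$ on $\mathcal{Y}$ directly. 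The vanishing is established at the level of the forms being pulled back, not at the level of slices, and this is what your argument is missing. By contrast, your treatment of the lower bound $\dim_S\mathcal{X}\le\rank_\Phi(\mathcal{X})$ via fiberwise nonvanishing plus slicing is sound (and matches the paper, which cites Bassanelli--Berteloot); nonvanishing of a slice does imply nonvanishing of the current, but that implication is one-directional.
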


\begin{proof}
Suppose that $\cY$ is $\Phi$-special, and set $k = \dim_S\cY$.  Let $\Psi$ be a family of polarized endomorphisms over $S$ that commutes with an iterate of $\Phi$ along $\mathcal{Z}$ with $\cY \subset \mathcal{Z} \subset S \times \bP^N$; replacing $\Psi$ with an iterate, we assume that $\cY$ is prefixed for $\Psi$.  Now replace $\cY$ with an iterate under $\Psi$ so that $\Psi(\cY) = \cY$, and pass to a normalization if it is not itself normal.  We have $\hat{T}_\Phi = \hat{T}_\Psi$ on $\cY$.  The current $\hat{T}_\Psi$, for a general family of polarized endomorphisms, is defined similarly to $\hat{T}_\Phi$; see \cite[\S2.3]{Gauthier:Vigny}. 
Since $(\Psi, \cY)$ defines a family of polarized dynamical systems of degree $e>1$, there is a (1,1)-form $\omega$ on $\cY$ representing the first Chern class of the polarizing (relatively ample) line bundle $\cL$ so that $\omega^{k+1} = 0$.  Because of the local-uniform convergence of the potentials of pullbacks of $\omega$ to a potential for $\hat{T}_\Psi$, we know that the wedge power $\hat{T}_\Psi^{\wedge j}$ is the limit of pullbacks $\frac{1}{e^{jn}} (\Psi^n)^* (\omega^{\wedge j})$ on $\cY$, for any $j\geq 1$.  As $\omega^{\wedge(k+1)} = 0$, we have $\hat{T}_{\Phi}^{\wedge (1 + \dim_S \cY) }\wedge [\mathcal{Y}] = 0$.  

The upper bound on $\rank_\Phi(\cX)$ is an immediate consequence, because $\cX$ is contained in a $\Phi$-special subvariety of relative dimension $r_{\Phi, \cX}$.  

For the lower bound on $\rank_\Phi(\cX)$, observe that the slice $(1,1)$-current $T_s$ of $\hat{T}_\Phi$ over $s \in S$ satisfies $T_s^{\wedge \ell} \wedge [X] \not=0$ for every subvariety $X$ of dimension $\ell$ in $\bP^N$ and every $s \in S$, by Lemma \ref{rank equality}.   The nonvanishing of $\hat{T}_\Phi^{\wedge \dim_S\cX} \wedge [\cX]$ follows; see, for example, \cite[Proposition 4.3]{Bassanelli:Berteloot} on slicing.
\end{proof}

Condition (2) of Conjecture \ref{characterization} means that $\rank_\Phi(\cX) \geq r_{\Phi, \cX}$.  By Proposition \ref{rank inequality}, Conjecture \ref{characterization} is thus a characterization of subvarieties $\cX$ of maximal $\Phi$-rank:

\begin{conjecture}\label{max rank}
Let $\Phi: S\times\bP^N\to S \times\bP^N$ be an algebraic family of morphisms of degree $> 1$, and let $\mathcal{X}\subset S\times\bP^N$ be an irreducible subvariety which is flat over $S$. The following are equivalent:
\begin{enumerate}
\item $\mathcal{X}$ contains a Zariski dense set of $\Phi$-preperiodic points. 
\item  
$\rank_{\Phi}(\mathcal{X})=r_{\Phi,\mathcal{X}}$.
\end{enumerate}
\end{conjecture}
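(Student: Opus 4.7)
By Proposition \ref{rank inequality} we always have $\dim_S\cX \leq \rank_{\Phi}(\cX)\leq r_{\Phi,\cX}$, so condition (2) of Conjecture \ref{max rank} is equivalent to the non-vanishing $\hat{T}_{\Phi}^{\wedge r_{\Phi,\cX}}\wedge [\cX]\neq 0$, i.e.\ to condition (2) of Conjecture \ref{characterization}. The two conjectures therefore coincide, and my plan is to establish both implications of Conjecture \ref{characterization}.

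The direction (2)$\Rightarrow$(1) is exactly Theorem \ref{easy}, whose proof is deferred to Section \ref{easy section}; no additional work is needed on this half, since it is carried out there by bifurcation-current techniques in the spirit of Dujardin, Berteloot--Bianchi--Dupont, and Gauthier.

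For the deep direction (1)$\Rightarrow$(2), the plan is first to reduce to a polarized setting. Fix a $\Phi$-special $\cY \supset \cX$ realizing the minimum $r_{\Phi,\cX} = \dim_S\cY$, together with a commuting polarizable endomorphism $\Psi$ with $\Psi(\cY) = \cY$ (after replacing $\Phi$ and $\Psi$ by iterates and passing to a normalization). Then $\hat{T}_{\Phi} = \hat{T}_{\Psi}$ on $\cY$, and every $\Phi$-preperiodic point of $\cX$ is $\Psi$-preperiodic. The problem becomes: assuming by minimality of $r_{\Phi,\cX}$ that $\cX$ is contained in no proper $\Phi$-special subvariety of $\cY$, show that a Zariski-dense set of $\Psi$-preperiodic points on $\cX$ forces $\hat{T}_{\Psi}^{\wedge \dim_S\cY}\wedge [\cX]\neq 0$. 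The approach is arithmetic: spread everything out to $\Qbar$ by a specialization argument in the style of \cite{Gao:Habegger:RMM}, reinterpret the top wedge as the adelic intersection number $\tilde{L_\Phi}^{\wedge r_{\Phi,\cX}}|_{X}\cdot \overline{H}^{\dim \cX-r_{\Phi,\cX}}$ via the Yuan--Zhang framework \cite{Yuan:Zhang:quasiprojective} and \cite{Guo}, assume for contradiction that this intersection vanishes, and then use Yuan--Zhang equidistribution on the quasi-projective base to conclude that the Zariski-dense set of height-zero (preperiodic) points must concentrate on a proper $\Psi$-special subvariety of $\cY$, contradicting the minimality reduction.

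The main obstacle is precisely this last bridge, from vanishing of the top wedge to forced containment in a proper $\Phi$-special subvariety. In the abelian case of Gao and Habegger, the analogous step rests on the Betti uniformization, functional transcendence (Ax--Schanuel), and K\"uhne's equidistribution refinement, none of which have established counterparts for arbitrary polarized dynamical systems on $\bP^N$. I therefore expect to need either a dynamical Ax--Schanuel-type statement describing the Zariski closures of preimages of periodic subvarieties under $\Psi$, or a pluripotential rigidity theorem for degenerate subvarieties of polarized families -- the latter perhaps modelled on the Geometric Dynamical Bogomolov framework of \cite{Gauthier:Vigny}. I would test the mechanism first on the cases already settled in the excerpt -- $N=1$ over arbitrary $S$, the case $\dim S = 0$ within the Ghioca--Tucker--Zhang formalism, and the case where $\Phi$ preserves an abelian scheme -- in order to isolate the analytic input that must replace functional transcendence in the general dynamical setting.
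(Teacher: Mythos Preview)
Your first paragraph is correct and is exactly what the paper does: Proposition \ref{rank inequality} gives $\rank_{\Phi}(\cX)\leq r_{\Phi,\cX}$, so condition (2) of Conjecture \ref{max rank} is equivalent to condition (2) of Conjecture \ref{characterization}, and the two conjectures coincide. That reformulation is the entire content the paper attaches to Conjecture \ref{max rank}.

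The gap is that you then set out to \emph{prove} Conjecture \ref{characterization}, but this is an open conjecture, not a theorem of the paper. The paper only establishes (2)$\Rightarrow$(1) (Theorem \ref{easy}) and records partial results for (1)$\Rightarrow$(2): the case $N=1$ (Theorem \ref{1}), the case $\dim S=0$ reducing to the Dynamical Manin--Mumford Conjecture (itself open in general), and the abelian-scheme case via Gao--Habegger. Your outlined approach to (1)$\Rightarrow$(2) --- reduce to a polarized $\Psi$ on $\cY$, spread to $\Qbar$, reinterpret via Yuan--Zhang intersection numbers, and then use equidistribution to force containment in a proper $\Phi$-special --- is a reasonable heuristic, but as you yourself note, the decisive step requires either a dynamical Ax--Schanuel or a rigidity theorem for degenerate subvarieties, neither of which exists. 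Moreover, Yuan--Zhang equidistribution needs the non-degeneracy hypothesis $\hat{T}_\Phi^{\wedge \dim\cX}\wedge[\cX]\neq 0$ (see \S\ref{non-degeneracy}), which is strictly stronger than the condition you are assuming fails, so the contradiction mechanism is not available in the form you sketch. In short, the proposal is not a proof but a research programme for an open problem; for the purposes of this paper, the correct ``proof'' of Conjecture \ref{max rank} is simply the one-line reduction to Conjecture \ref{characterization} that you gave in your first paragraph.
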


\subsection{Families of abelian varieties}
Let us now compare Conjecture \ref{max rank} with the following theorem of Gao and Habegger.

\begin{theorem}\cite[Theorem 1.3]{Gao:Habegger:RMM}  \label{GH theorem}
Let $\mathcal{A}\to S$ be an abelian scheme of relative dimension $g\ge 1$.  Assume that $\mathcal{X}\subset \mathcal{A}$ is a closed irreducible subvariety, flat over $S$, for which the orbit $\Z\cdot \mathcal{X}$ is Zariski dense in $\mathcal{A}$. The following are equivalent:
\begin{enumerate}
\item $\mathcal{X}$ contains a Zariski dense set of torsion points in $\mathcal{A}$. 
\item $\rank_{\mathrm{Betti}}(\mathcal{X})=2g$.
\end{enumerate}
\end{theorem}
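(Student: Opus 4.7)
The plan is to apply the Pila--Zannier strategy to the Betti map, leveraging functional transcendence (Ax--Schanuel for universal abelian varieties) and uniform height bounds. Analytically, over any contractible open $U \subset S^{\mathrm{an}}$, the local system $R^1(\mathcal{A}/S)_*\Z$ trivializes and yields a real-analytic submersion $b_U: \mathcal{A}^{\mathrm{an}}|_U \to (\R/\Z)^{2g}$ whose fibers are the loci of constant period coordinates; a point of $\mathcal{A}|_U$ has torsion order dividing $N$ if and only if its image under $b_U$ lies in $(\tfrac{1}{N}\Z/\Z)^{2g}$. The Betti rank $\rank_{\mathrm{Betti}}(\mathcal{X})$ is the maximum real rank of $db_U$ restricted to the tangent spaces at smooth points of $\mathcal{X} \cap \mathcal{A}|_U$, over all such $U$.

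For the direction (2) $\Rightarrow$ (1), suppose $\rank_{\mathrm{Betti}}(\mathcal{X}) = 2g$. Then $b_U|_{\mathcal{X}}$ is a submersion on a nonempty open subset of $\mathcal{X} \cap \mathcal{A}|_U$, and hence its image contains a nonempty open set in $(\R/\Z)^{2g}$. Since $(\Q/\Z)^{2g}$ is dense there, pulling back produces a topologically dense set of torsion points of $\mathcal{X}$. To upgrade topological density to Zariski density, I would argue by contradiction: if the torsion locus of $\mathcal{X}$ were contained in a proper irreducible subvariety $\mathcal{Y} \subsetneq \mathcal{X}$, then the Pila--Wilkie counting theorem, applied in the o-minimal structure $\R_{\mathrm{an,exp}}$ to a definable lift of $\mathcal{Y}$ to the universal cover of $\mathcal{A}|_U$, combined with the Ax--Schanuel theorem for universal abelian varieties (Gao) and a uniform height lower bound (K\"uhne's uniform Bogomolov, or the Gao--Habegger variant), forces $\mathcal{Y}$ to lie inside a proper weakly special subvariety of $\mathcal{A}$. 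But no such proper subvariety can absorb a topologically dense set of torsion in $\mathcal{X}$, a contradiction.

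For the direction (1) $\Rightarrow$ (2), suppose $\mathcal{X}$ carries Zariski dense torsion. If $\rank_{\mathrm{Betti}}(\mathcal{X}) < 2g$, then Gao's structure theorem for degenerate subvarieties asserts that $\mathcal{X}$ is contained in a proper weakly special subvariety of $\mathcal{A}$, essentially a translate of a proper abelian subscheme after an isogeny decomposition of $\mathcal{A}$. Such a containment forces $\Z \cdot \mathcal{X}$ into a proper subscheme of $\mathcal{A}$, violating the hypothesis that $\Z \cdot \mathcal{X}$ is Zariski dense in $\mathcal{A}$. Hence $\rank_{\mathrm{Betti}}(\mathcal{X}) = 2g$.

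The hardest step is the implication (2) $\Rightarrow$ (1). Coordinating the Pila--Wilkie counting estimates with the Ax--Schanuel input and the requisite uniform height lower bounds on algebraic points of small canonical height is delicate; the height machinery must interact cleanly with the Betti form (the pullback of Haar measure on $(\R/\Z)^{2g}$), and must control the distribution of torsion along proper subvarieties with enough precision to contradict the dense image produced by the submersion. This coordination is the technical core of \cite{Gao:Habegger:RMM}, and I expect any independent approach would need to rediscover essentially these ingredients.
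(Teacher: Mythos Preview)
You have inverted the two implications. The direction $(2)\Rightarrow(1)$ is elementary, as the paper notes in the second remark following the statement (citing \cite[Proposition 2.1.1]{ACZ:Betti}): if $\rank_{\mathrm{Betti}}(\mathcal{X})=2g$, the Betti map restricted to $\mathcal{X}$ is a real-analytic submersion on a nonempty Euclidean-open $U\subset\mathcal{X}^{\mathrm{sm}}$, and pulling back the dense set $(\Q/\Z)^{2g}$ produces torsion points that are Euclidean-dense in $U$. Since $\mathcal{X}$ is irreducible, any proper Zariski-closed subset has empty Euclidean interior, so Euclidean density in a nonempty open set already gives Zariski density. No Pila--Wilkie, Ax--Schanuel, or height input is needed here.

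The deep content of \cite{Gao:Habegger:RMM} is the implication $(1)\Rightarrow(2)$, and your sketch for it has a genuine gap. Your argument runs: ``$\rank_{\mathrm{Betti}}(\mathcal{X})<2g$ forces, via Gao's structure theorem, containment of $\mathcal{X}$ in a proper weakly special subvariety, contradicting density of $\Z\cdot\mathcal{X}$.'' But hypothesis~(1) is never used in this chain, so you are in effect claiming that the standing assumption on $\Z\cdot\mathcal{X}$ alone forces maximal Betti rank. That is false: with $\dim S<g$, take $\mathcal{X}=\sigma(S)$ for a non-torsion section $\sigma$ whose $\Z$-orbit is Zariski-dense in $\mathcal{A}$; then $\rank_{\mathrm{Betti}}(\mathcal{X})\le 2\dim S<2g$, yet $\mathcal{X}$ lies in no proper subgroup scheme. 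The results of \cite{Gao:generic} do not say that non-maximal Betti rank implies containment in a proper weakly special. In \cite{Gao:Habegger:RMM} the Pila--Zannier strategy (o-minimal counting, mixed Ax--Schanuel, and the height inequality) is deployed precisely for $(1)\Rightarrow(2)$: the Zariski-dense torsion supplies the abundance of rational points to which Pila--Wilkie is applied, and the transcendence and height inputs then force the rank to be maximal. You have placed all of this machinery on the trivial direction and left the substantive one essentially unargued.
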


\noindent
Here $\rank_{\mathrm{Betti}}(\mathcal{X})$ is the generic Betti rank of $\mathcal{X}$. Its study was initiated by Andr\'e-Corvaja-Zannier \cite{ACZ:Betti} and it has now become ubiquitous in issues related to unlikely intersections.  We briefly recall its definition and then explain why Conjecture \ref{max rank} is a generalization of Theorem \ref{GH theorem}. 

Let $\pi: \mathcal{A}\to S$ denote the projection, and let $\Delta \subset S$ be a simply connected open subset.  Choose holomorphic functions $\omega_i: \Delta\to \bC^g$, $i = 1, \ldots, 2g$, defining a basis of the period lattice of the fibers, so that
	$$\mathcal{A}_s \simeq \bC^g/\omega_1(s)\Z \oplus\cdots\oplus \omega_{2g}(s) \Z$$
for $s \in \Delta$, with the isomorphism denoted by
	$$x \mapsto \sum_{i=1}^{2g}\beta_i(x)\omega_i(s)$$
for $\beta_i(x) \in \R/\Z$.  The {\bf Betti map} $b_{\Delta}:\pi^{-1}(\Delta) \to  \mathbb{R}^{2g}/\mathbb{Z}^{2g}$ is a real-analytic map defined by
$$b_{\Delta}(x)=(\beta_1(x),\ldots, \beta_{2g}(x))$$
for $x\in \mathcal{A}_s$.  The {\bf generic Betti rank} $\rank_{\mathrm{Betti}}(\mathcal{X})$ is  the maximal rank of the differential of the Betti map over all smooth points in $\mathcal{X}\cap \pi^{-1}(\Delta)$, and it is independent of the choice of simply connected $\Delta$.  Associated to the Betti map and a choice of polarization $\cL$ is a $(1,1)$-form $\omega_{\mathcal{A}, \cL}$ on $\mathcal{A}$, called the Betti form and first introduced by Mok \cite{Mok:Kahler}; see \cite[Proposition 2.2]{DGH:uniformity}. It can be defined by  
\begin{equation} \label{Betti form def}
	\omega_{\mathcal{A}, \cL}=\sum_{1\le i<j\le 2g} a_{ij} \, d\beta_i\wedge d\beta_j,
\end{equation}
for constants $a_{ij}\in\bR$; its restriction to each fiber $\mathcal{A}_s$ is the Riemann form associated to the choice of polarization.  See, for example, \cite[\S2.4]{CGHX}.

\begin{prop} \label{GH case}  
Conjecture \ref{max rank} implies Theorem \ref{GH theorem}.
\end{prop}

\begin{proof}
Let us first replace $S$ with a Zariski-open subset so that all fibers of $\cA \to S$ are (smooth) abelian varieties.  Consider the multiplication-by-$M$ morphism
	$$[M]:\mathcal{A}\to\mathcal{A}$$
for any choice of integer $M\geq 2$.  The endomorphism is polarizable with degree $M^2$ on each fiber of the projection $\cA \to S$.  Again replacing $S$ with a Zariski-open subset if necessary, there is an embedding 
	$$\cA \hookrightarrow S \times \bP^N$$
for some $N$, so that, as in \cite[Corollary 2.2]{Fakhruddin:selfmaps}, $[M]$ extends to an algebraic family of morphisms 
	$$\Phi_{M,\cA}:  S \times \bP^N \to S \times\bP^N.$$
The Green current $\hat{T}_{\Phi_{M,\cA}}$ on $S\times\bP^N$ restricts to the Betti form $\omega_{\cA,\cL}$ on $\cA$ associated to the polarization $\cL$; indeed, the current is uniquely determined (in its cohomology class $c_1(\cL)$) by its invariance under pullback for the restriction $\Phi_{M,\cA}|_\cA$, where $[M]^* \omega_{\cA, \cL} = M^2 \omega_{\cA, \cL}$.  

As in the proof of \cite[Proposition 2.2 (iii)]{DGH:uniformity}, we have
\begin{equation}\label{rank equality eq}
	\rank_{\mathrm{Betti}}(\mathcal{X})=2 \rank_{\Phi_{M,\cA}}(\mathcal{X}).
\end{equation}
In detail, let $2k$ be the generic Betti rank of $\cX$; it is always even by, for example, the formulas provided in \cite{ACZ:Betti}.  It is clear that $\rank_{\Phi_{M,\cA}}(\mathcal{X}) \leq k$ from the definition of the Betti form $\omega_{\cA, \cL}$ given in \eqref{Betti form def}.  On the other hand, as pointed out in \cite[\S2.4]{CGHX}, the Betti form acts on pairs of tangent vectors in $\cA$ by the composition of a complex-linear projection to the tangent space to a fiber $\cA_s$ and then applying the K\"ahler form $\omega_s = \omega_{\cA, \cS}|_{\cA_s}$.  In particular, since the image under the projection from $T_x\cX$ to $T_x \cA_s$ at a point $x\in \cX$ is generally a complex subspace of dimension $k$, we deduce that $\omega_{\cA,\cL}^{\wedge k} \not=0$.  In other words, $\rank_{\Phi_{M,\cA}}(\mathcal{X}) \geq k$, demonstrating equality in \eqref{rank equality eq}. 

It remains to relate the hypothesis on $\cX$, that its $\Z$-orbit is Zariski dense in $\cA$, to our notion of relative special dimension. Indeed, the condition on $\cX \subset \cA$ implies that $\cX$ is not contained in any proper subgroup scheme over $S$ nor a torsion-translate of such a subgroup.  In particular, the smallest $\Phi_{M,\cA}$-special subvariety containing $\cX$ in $S\times \bP^N$ is the embedded copy of $\cA$ itself.  That is, $r_{\Phi, \cX} = g$, the relative dimension of $\cA$.
\end{proof}

\begin{remark}
Comparing the statement of Theorem \ref{GH theorem} with Conjectures \ref{characterization} and \ref{max rank}, it is important to note some additional complications that arise in the dynamical setting.  It can happen that the $\Phi$-orbit of a subvariety $\cX \subset S \times\bP^N$ is Zariski dense in some $\Phi$-special subvariety $\mathcal{Y} \subset S\times\mathbb{P}^N$, while $r_{\Phi, \cX} < \dim_S \cY$.  Such examples led to the formulations of the Dynamical Manin-Mumford conjecture in \cite{Ghioca:Tucker:Zhang} and \cite{Ghioca:Tucker:DMM} and the introduction of the auxiliary endomorphism $\Psi$ in the definition of $\Phi$-special.  For example, $\Phi: E \times E \to E\times E$ can act by a product of complex-multiplication endomorphisms on an elliptic curve $E$, chosen so that the diagonal $\Delta_E \subset E\times E$ (which contains a Zariski-dense set of preperiodic points, being the torsion points of $E$) is {\em not} a preperiodic curve for $\Phi$, so its $\Phi$-orbit is Zariski-dense in $E\times E$.  On the other hand, the diagonal $\Delta_E$ invariant under the usual $\Z$-action on $E\times E$, so $r_{\Phi, \Delta_E} = 1$.  See \cite{Ghioca:Tucker:Zhang} and \cite{Pazuki:DMM} for details and explicit constructions.
\end{remark}

\begin{remark}
The statement of Theorem \ref{easy}, that (2) implies (1) in Conjecture \ref{characterization}, is easy to prove in the setting of abelian varieties, due to the smoothness and analytic properties of the Betti forms \cite[Proposition 2.1.1]{ACZ:Betti}.  (The proof in \cite{ACZ:Betti} is written for sections of a family $\mathcal{A} \to S$ of abelian varieties; to treat the case of a subvariety $\cX$ in $\mathcal{A}$, notice that $\mathcal{X}$ is the image of the identity section of the base extension $\mathcal{A}\times _S\mathcal{X}\to \mathcal{X}$.)
\end{remark}

\subsection{Non-degeneracy} \label{non-degeneracy}
We conclude this section by observing that there is an important notion in the literature which is similar to but distinct from condition (2) of Conjecture \ref{characterization}; we include it here for comparison.  By definition, $\cX$ in $S \times\bP^N$ is {\bf non-degenerate for $\Phi$} if $\hat{T}_{\Phi}^{\wedge \dim \cX} \wedge [\cX] \not= 0$.  This concept was introduced for subvarieties $\cX$ in families of powers of elliptic  curves by Habegger \cite{Habegger:special} and played an important role in the proof of the Geometric Bogomolov Conjecture
over function fields in characteristic 0 by Gao--Habegger and Cantat--Gao--Habegger--Xie \cite{Gao:Habegger:Bog, CGHX}, the proof of the uniform Mordell--Lang conjecture by Gao--Habegger, K\"uhne and Gao--Ge--K\"uhne \cite{DGH:pencils, DGH:uniformity, Kuhne:UML, Gao:Ge:Kuhne} and the proof of the relative Manin-Mumford conjecture by Gao--Habegger \cite{Gao:Habegger:RMM}.  In \cite{Yuan:Zhang:quasiprojective}, this notion of non-degeneracy was extended to families of polarized dynamical systems, and it is a key hypothesis for their theorems on arithmetic equidistribution.  In particular, if $\Phi$ and $\cX$ are defined over a number field $K$, then Conjecture \ref{characterization} would characterize the existence of a Zariski-dense set of {\em small} points in $\cX$, while the non-degeneracy condition would imply that the $\Gal(\Kbar/K)$-orbits of these small points are uniformly distributed with respect to the measure $\hat{T}_{\Phi}^{\wedge \dim \cX} \wedge [\cX] \not= 0$ (if the points exist), as proved in \cite[Theorem 6.2.3]{Yuan:Zhang:quasiprojective}  \cite[Theorem 2]{Gauthier:goodheights} \cite[Theorem 6.2]{Gauthier:Taflin:Vigny}.  Note that every projective algebraic subvariety of $\bP^N$ is non-degenerate for a morphism $f: \bP^N\to \bP^N$ when $\dim S = 0$, by Lemma \ref{max rank}.

\bigskip
\section{Some special cases of Conjecture \ref{characterization}}
\label{known}

In this section, we provide examples where Conjecture \ref{characterization} is known, moving away from the setting of abelian varieties.  We begin in the setting of the original Dynamical Manin-Mumford Conjecture, corresponding to the case of $\dim S = 0$, in \S\ref{DMM}.  In \S\ref{N=1}, we observe that Conjecture \ref{characterization} holds in all cases when $N=1$, and we relate it to the theory of $J$-stability for maps on $\bP^1$.  In \S\ref{DAO} we explain that Conjecture \ref{characterization} implies the so-called Dynamical Andr\'e-Oort conjecture (or ``DAO"). Finally, in \S\ref{uniform} we illustrate that uniform versions of the conjecture are consequences of the conjecture itself, with the example of shared preperiodic points for distinct maps on $\bP^1$. 

\subsection{The Dynamical Manin-Mumford Conjecture} \label{DMM}
When $S$ is a point, so that $\dim S = 0$ and $\Phi: \bP^N \to \bP^N$ is an endomorphism over $\C$, Conjectures \ref{characterization} and \ref{dimension} reduce to Zhang's Dynamical Manin-Mumford Conjecture (or DMM), as reformulated by Ghioca and Tucker in \cite{Ghioca:Tucker:DMM}:  conjecturally, a subvariety $X$ of $\bP^N$ contains a Zariski-dense set of preperiodic points if and only if it is a $\Phi$-special subvariety.  The reduction follows from observing that the current $T_\Phi^{\wedge r} \wedge [X]$ will vanish if $r > \dim X$, but is nonzero for $r = \dim X$ by Lemma \ref{rank equality}.  The implication that a $\Phi$-special subvariety of $\bP^N$ always contains a dense set of preperiodic points is well known; see for example \cite{Fakhruddin:selfmaps, Briend:Duval, Dinh:Sibony:allure}.  However, the converse implication has been proved only in a few settings outside of the cases of endomorphisms of abelian varieties, all of which we outline here.

It is worth observing that DMM is obvious for maps on $\bP^1$:  a single point is either preperiodic (which is equivalent to special for points) or it is not.  For $N>1$, the DMM conjecture has been fully resolved for polarized endomorphisms of $(\bP^1)^N$ over $\C$  \cite{Ghioca:Nguyen:Ye:DMM1, Ghioca:Nguyen:Ye:DMM2, Mavraki:Schmidt:Wilms}, for subvarieties $X \subset (\bP^1)^N$ of arbitrary dimension. A proof of DMM for polynomial maps of $\bA^2$ that extend regularly to $\bP^2$, assuming that the complex algebraic curve $X$ satisfies a certain condition on its intersection with the line at infinity, was provided in \cite{Dujardin:Favre:Ruggiero}.  A related problem for polynomial automorphisms of $\mathbb{A}^2$ was treated in \cite{DF}, but Conjecture \ref{characterization} does not cover this setting.

\subsection{Dimension $N=1$} \label{N=1}
Here we show that Conjecture \ref{characterization} in the case of $N=1$ follows from \cite[Theorem 1.1]{D:stableheight}. 

\begin{theorem}  \label{1}
Conjecture \ref{characterization} holds in dimension $N=1$.
\end{theorem}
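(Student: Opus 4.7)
The plan is to invoke Theorem~\ref{easy} for the implication $(2)\Rightarrow(1)$ and to reduce the converse $(1)\Rightarrow(2)$ to \cite[Theorem~1.1]{D:stableheight}, which was already announced in the introduction as equivalent to Conjecture~\ref{characterization} when $N=1$. The structure of the argument will be a dichotomy on the relative dimension $\dim_S\mathcal{X}\in\{0,1\}$.

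If $\dim_S\mathcal{X}=1$, irreducibility and flatness force $\mathcal{X}=S\times\bP^1$, which is itself $\Phi$-special; then $r_{\Phi,\mathcal{X}}=1$ and condition $(2)$ reads $\hat{T}_\Phi\wedge[S\times\bP^1]\neq 0$, which follows from Proposition~\ref{rank inequality} (or directly, via Lemma~\ref{rank equality}, from the fact that each fiber slice of $\hat{T}_\Phi$ is a probability measure). So the substantive case is $\dim_S\mathcal{X}=0$, in which case $\mathcal{X}\to S$ is generically finite and flat. First I would pass to a finite étale base change, after which $\mathcal{X}$ becomes the image of a section $a:S\to S\times\bP^1$; neither condition $(1)$ nor $(2)$ is affected by this reduction. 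Next, I would distinguish whether or not $\mathcal{X}$ is $\Phi$-special. If it is, then $r_{\Phi,\mathcal{X}}=0$ and $(2)$ reduces to $[\mathcal{X}]\neq 0$, which is automatic, while $(1)$ is immediate since, on $\bP^1$, commuting polarized endomorphisms share their preperiodic sets with an iterate of the ambient family, so $\mathcal{X}$ is in fact persistently $\Phi$-preperiodic. If $\mathcal{X}$ is not $\Phi$-special, then by irreducibility any strictly larger $\Phi$-special subvariety containing $\mathcal{X}$ must be all of $S\times\bP^1$, so $r_{\Phi,\mathcal{X}}=1$ and condition $(2)$ becomes $\hat{T}_\Phi\wedge[\mathcal{X}]\neq 0$.

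The crucial identification is then that, via slicing along the section $a$, the current $\hat{T}_\Phi\wedge[\mathcal{X}]$ corresponds to the pullback $a^*\hat{T}_\Phi$, which is exactly the bifurcation current $T_{\Phi,a}$ of the marked pair $(\Phi,a)$ on $S$; locally, it is the $dd^c$ of the plurisubharmonic function $s\mapsto g_{f_s}(a(s))$, where $g_{f_s}$ is a choice of Green function for $f_s$. With this identification in place, \cite[Theorem~1.1]{D:stableheight} supplies the equivalence between the Zariski-density of the $\Phi$-preperiodic parameters for $a$ in $S$ and the non-vanishing of $T_{\Phi,a}$ (outside the persistently preperiodic case already handled), yielding condition $(2)$.

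The main obstacle I anticipate is bookkeeping rather than analysis: one must check that the finite étale base change is compatible with the notion of $\Phi$-special subvariety and with the relative special dimension $r_{\Phi,\mathcal{X}}$, and one must carefully identify $\hat{T}_\Phi\wedge[\mathcal{X}]$ with $T_{\Phi,a}$ via the slicing theory of positive closed $(1,1)$-currents (in the spirit of \cite{Bassanelli:Berteloot}). The genuine analytic content of the proof, however, is already entirely contained in \cite[Theorem~1.1]{D:stableheight}, so the argument amounts to an assembly of known pieces together with Theorem~\ref{easy}.
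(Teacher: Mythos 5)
Your proposal is correct and follows essentially the same approach as the paper: reduce to the case of a marked point $a:S\to\bP^1$ after a base change, split on $r_{\Phi,\cX}\in\{0,1\}$ (equivalently, on whether $a$ is persistently preperiodic), and for $r_{\Phi,\cX}=1$ identify $\hat T_\Phi\wedge[\cX]$ with the bifurcation current of the pair $(\Phi,a)$ so that the equivalence becomes precisely \cite[Theorem 1.1]{D:stableheight} (plus the density statement of \cite[Proposition 5.1]{D:stableheight}). The one presentational difference is that you route the implication $(2)\Rightarrow(1)$ through Theorem \ref{easy} (proved in Section \ref{easy section} via hyperbolic sets), whereas the paper's proof of Theorem \ref{1} is self-contained and obtains $(2)\Rightarrow(1)$ directly from Montel's theory of normal families; either works, but the paper's route avoids forward references.
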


\begin{proof}
An irreducible flat subvariety $\cX$ of $S \times \bP^1$, if not equal to all of $S \times\bP^1$, is a multi-section over $S$.  That is, after replacing the parameter space $S$ with a branched cover, we may assume that $\mathcal{X}$ is the graph of a marked point $a:  S \to \bP^1$.  If $r_{\Phi, \cX}=0$, then $\cX$ is itself $\Phi$-special, meaning that the marked point $a$ is persistently preperiodic for $\Phi$.  In this case, the preperiodic points are obviously dense in $\cX$ and the current $\hat{T}_\Phi^0 \wedge [\cX] = [\cX]$ is clearly nonzero, so the equivalence of (1) and (2) in Conjecture \ref{characterization} holds.  If $r_{\Phi, \cX} = 1$, then the current of (2) is nonzero if and only if the point is unstable, in the sense of \cite{D:stableheight}.  In other words, the sequence of holomorphic maps $\{s \mapsto \Phi_s^n(a(s))\}$ fails to formal a normal family on the parameter space $S$; see, for example, \cite[Theorem 9.1]{D:lyap} for the equivalence of normality and the vanishing of $\hat{T}_\Phi \wedge [\cX]$.  As proved in \cite[Theorem 1.1]{D:stableheight}, if the point $a$ is not persistently preperiodic, then stability on all of $S$ implies that the family $\Phi$ is isotrivial and the point $a$ will never be preperiodic.  On the other hand, instability implies, by Montel's theory of normal families, that the point $a$ will be preperiodic for a Zariski-dense set of parameters; see, for example, \cite[Proposition 5.1]{D:stableheight}.  So the equivalence in Conjecture \ref{characterization} holds also for $r_{\Phi, \cX} = 1$.
\end{proof}

\begin{remark} \label{equivalent}
Conjecture \ref{characterization} also implies \cite[Theorem 1.1]{D:stableheight} (and therefore also \cite[Theorem 2.5]{Dujardin:Favre:critical} and \cite[Lemma 2.1]{McMullen:families} addressing the case of marked critical points in $\bP^1$), so it is logically equivalent in dimension $N=1$.  Indeed, suppose that $\Phi:S \times\bP^1\to S\times\bP^1$ is an algebraic family of maps of degree $d>1$, and suppose that $\Gamma_a\subset S \times \bP^1$ is the graph of a marked point $a: S \to \bP^1$.  Assume the pair $(\Phi, a)$ is stable, so that $\hat{T}_\Phi\wedge[\Gamma_a]= 0$.   We will deduce from Conjecture \ref{characterization} that $a$ is either persistently preperiodic or the pair $(\Phi,a)$ is isotrivial.  It suffices to assume that $\dim S = 1$.  Passing to a branched cover of $S$ if necessary, we can mark three distinct periodic points for $\Phi$ and, removing a Zariski-closed subset of $S$ where they collide, we can change coordinates on $\bP^1$ so that $\{0,1,\infty\}$ are persistently periodic.  Stability of $(\Phi, a)$ implies one of two things: either (1) the graph $\Gamma_a$ is itself preperiodic (i.e., the special dimension is $r_{\Phi, \Gamma_a} = 0$), or (2) as a consequence of Conjecture \ref{characterization} with $r_{\Phi, \Gamma_a}=1$, the preperiodic points of $\Phi$ cannot be Zariski dense in $\Gamma_a$.  In case (1), we are done.  In case (2), this means that there is a Zariski-open subset $U$ of $S$ over which the point $a$ is never preperiodic.  In particular, the point $a$ and its infinite forward orbit is disjoint from the set $\{0,1,\infty\}$ over the quasiprojective curve $U$.  But, as McMullen observed in his proof of \cite[Lemma 2.1]{McMullen:families}, there are at most finitely many non-constant holomorphic functions $U \to \bP^1\setminus \{0,1,\infty\}$.  This implies that the iterates $\Phi^n(a)$ must be constant (and distinct) over $U$, for all $n$ sufficiently large.  But then, by a simple interpolation argument, we see that $\Phi_s$ is independent of the parameter $s \in U$. In other words, in case (2), the family $\Phi$ is isotrivial.
\end{remark}

\begin{remark} \label{J bif}
Suppose that $\Phi: S \times \bP^1\to S \times \bP^1$ is an algebraic family of endomorphisms on $\bP^1$ and that 
	$$\cX = \Crit(\Phi)$$ 
is the critical locus.  (This $\cX$ is not necessarily irreducible, but we can apply Theorem \ref{1} to each component.)  Then $r_{\Phi, \cX} = 0$ (for all components) if and only if $\Phi$ is a family of postcritically finite maps.  We know from Thurston rigidity that this can hold if and only if $\Phi$ is either an isotrivial family or a family of flexible Latt\`es maps; see \cite[Theorem 1]{Douady:Hubbard:Thurston} or \cite[Theorem 6.2]{McMullen:families}.

For $r_{\Phi, \cX} = 1$, the current of Conjecture \ref{characterization} condition (2),
	$$\hat{T}_\Phi \wedge [\Crit(\Phi)],$$
projects to the parameter space $S$ as the {\bf bifurcation current} $\Tbif$ for the family $\Phi$.  See \cite{D:current} and \cite{Dujardin:Favre:critical} for definitions.  It was proved in \cite{D:current} that the bifurcation current vanishes if and only if the family is $J$-stable in the sense of Ma\~n\'e-Sad-Sullivan \cite{Mane:Sad:Sullivan} and Lyubich \cite{Lyubich:stability}.  So Conjecture \ref{characterization} (i.e., Theorem \ref{1}) implies a (known) characterization of instability in algebraic families $\Phi$ by the existence of many parameters with (non-persistent) preperiodic critical points; compare \cite[Lemma 2.1]{McMullen:families} and \cite[Theorem 2.5]{Dujardin:Favre:critical}.  

We return to the topic of $J$-stability in higher dimensions in Section \ref{stability}.
\end{remark}

\subsection{DAO as a special case} \label{DAO}

Let $f: S\times \mathbb{P}^1\to S\times \mathbb{P}^1$ be an algebraic family of rational maps on $\bP^1$ of degree $d\ge 2$.  We say the family has {\bf dimension $m$ in moduli} if the induced projection from $S$ to $\M_d^1$, the moduli space of all maps of degree $d$ on $\bP^1$ modulo M\"obius conjugacy, has $m$-dimensional image.  Here we observe that Conjecture \ref{characterization} (in fact, in its weaker form of Conjecture \ref{dimension}) implies the following conjecture proposed by Baker-DeMarco \cite{BD:polyPCF} and Ghioca-Hsia-Tucker \cite{Ghioca:Hsia:Tucker}; the statement here is given explicitly in \cite[Conjecture 6.1]{D:stableheight}.

\begin{conjecture} 
\label{DAOonemap}
Assume that $\dim S =m > 0$ and that $f$ has dimension $m$ in moduli.  Let $a_0,\ldots,a_m:  S \to \bP^1$ be $m+1$ marked points, and assume that there is a Zariski-dense set of parameters $s\in S$ such that $a_0(s), \ldots, a_m(s)$ are simultaneously $f_s$-preperiodic.  Then the marked points $a_0, \ldots, a_m$ are dynamically related along $S$. 
\end{conjecture}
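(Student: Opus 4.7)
The plan is to encode Conjecture \ref{DAOonemap} as an instance of Conjecture \ref{dimension} applied to the diagonal family on $(\bP^1)^{m+1}$. First I would form the diagonal endomorphism
$$\Phi=(f,\ldots,f):S\times(\bP^1)^{m+1}\longrightarrow S\times(\bP^1)^{m+1},$$
which, after the Segre embedding $(\bP^1)^{m+1}\hookrightarrow\bP^N$ and an application of \cite{Fakhruddin:selfmaps}, extends to an algebraic family $\tilde\Phi:S\times\bP^N\to S\times\bP^N$ of degree $d$ endomorphisms leaving $S\times(\bP^1)^{m+1}$ invariant. Let $\cX\subset S\times(\bP^1)^{m+1}$ be the graph of the section $(a_0,\ldots,a_m):S\to(\bP^1)^{m+1}$; it is irreducible, $m$-dimensional, and flat (in fact isomorphic) over $S$. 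The fiber of $\tilde\Phi$ over $s$ acts as $f_s\times\cdots\times f_s$ on $(\bP^1)^{m+1}$, so a point in this fiber is $\tilde\Phi$-preperiodic exactly when each of its coordinates is $f_s$-preperiodic. The hypothesis of Conjecture \ref{DAOonemap} thus translates precisely into the statement that $\cX$ contains a Zariski-dense set of $\tilde\Phi$-preperiodic points.

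Granting Conjecture \ref{dimension}, this forces $\cX$ to have codimension at most $\dim S=m$ in some $\tilde\Phi$-special subvariety $\cY\subset S\times\bP^N$. Since $\dim\cX=m$, this yields $\dim\cY\le 2m$, and therefore
$$\dim_S\cY\le m<m+1.$$
In particular the generic fiber $\mathbf Y$ of $\cY$ is a proper subvariety of $(\bP^1)^{m+1}$ over $\overline{\C(S)}$ which contains the generic fiber $(\mathbf{a_0},\ldots,\mathbf{a_m})$ of $\cX$, sits inside some $\mathbf Z\subset(\bP^1)^{m+1}$ invariant under an iterate $\mathbf\Phi^n$, and is preperiodic for a polarizable endomorphism $\mathbf\Psi$ of $\mathbf Z$ that commutes with $\mathbf\Phi^n$. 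Read back on the marked points, $(\mathbf{a_0},\ldots,\mathbf{a_m})$ lies on a proper $\mathbf\Psi$-preperiodic subvariety of $(\bP^1)^{m+1}$, which is exactly the assertion that the $a_i$ are dynamically related along $S$.

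The hardest step, and the one that requires actual dynamical content rather than formal manipulation, is the last one: translating the abstract conclusion afforded by the $\tilde\Phi$-special subvariety $\mathbf Y$ into the concrete notion of dynamical relation used in \cite{BD:polyPCF, Ghioca:Hsia:Tucker}. This is where the hypothesis that $f$ has dimension $m$ in moduli must be invoked. Without it, one could have a proper $\mathbf Y$ that arises artificially from the failure of $S\to\M_d^1$ to be generically finite, reflecting only a fibration of the parameter space rather than a true constraint among the orbits of the $a_i$. The dimension-$m$-in-moduli assumption is designed to exclude exactly this possibility: after passing to a generically finite cover we may assume $S\hookrightarrow\M_d^1$ is quasi-finite onto its image, so that any $\tilde\Phi$-special $\cY$ containing $\cX$ projects to a nontrivial invariant subvariety in the fiber direction.

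To close the argument one would then appeal to the classification of proper invariant subvarieties of $(\bP^1)^{m+1}$ under powers of a diagonal action, in the spirit of \cite{Ghioca:Nguyen:Ye:DMM1, Ghioca:Nguyen:Ye:DMM2, Mavraki:Schmidt:Wilms}: such a $\mathbf Y$ must either factor through a coordinate projection or be cut out by a fibered symmetry of $\mathbf\Phi^n$, each of which yields a nontrivial dynamical relation among the marked points $a_0,\ldots,a_m$ in the sense of Conjecture \ref{DAOonemap}. The main obstacle is therefore not the application of Conjecture \ref{dimension} itself, which is nearly automatic, but the dictionary between \emph{$\tilde\Phi$-special subvarieties of $(\bP^1)^{m+1}$} and \emph{classical dynamical relations on $\bP^1$}, which has to be made explicit using the hypothesis on moduli.
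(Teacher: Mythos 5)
Your set-up and the application of Conjecture \ref{dimension} are exactly as in the paper: take the fiber power $\Phi = f^{[m+1]}$ on $S\times(\bP^1)^{m+1}$, extend to some $\bP^M$ via the polarization, let $\Gamma$ be the graph of $(a_0,\ldots,a_m)$, and apply Conjecture \ref{dimension} to deduce $r_{\Phi,\Gamma}\le m$. So the overall route matches.

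Where you diverge is in the final translation, and here your account is more complicated than it needs to be — and slightly misplaces the role of the hypothesis. You correctly flag that the conclusion of Conjecture \ref{dimension} gives a $\Phi$-\emph{special} subvariety $\cY$ of relative dimension $\le m$, whereas ``dynamically related'' as defined in the paper asks precisely for a proper $\Phi$-\emph{preperiodic} subvariety of $S\times(\bP^1)^{m+1}$ projecting dominantly to $S$ and containing $\Gamma$. But to bridge that gap one does not need a Medvedev--Scanlon style classification of invariant subvarieties of $(\bP^1)^{m+1}$; nor is the hypothesis that $f$ has dimension $m$ in moduli there to rule out ``artificial $\cY$ coming from a fibration of $S\to\M_d^1$'' (dominance of $\cY\to S$ already excludes purely base-direction constraints). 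The relevant and simpler observation, used in the paper and drawn from Section 2 of \cite{Ghioca:Tucker:DMM}, is that for a fiber power of a \emph{nonisotrivial} family on $\bP^1$, every $\Phi$-special subvariety of $S\times(\bP^1)^{m+1}$ is in fact $\Phi$-preperiodic: the only commuting $\mathbf\Psi$ available beyond iterates of $\mathbf\Phi$ would have to come from CM Latt\`es maps, which are rigid in $\M_d^1$ and hence incompatible with $m>0$ dimensions in moduli. That is where the hypothesis enters (through the resulting nonisotriviality), and once you know $\cY$ is $\Phi$-preperiodic you are literally done by the definition of dynamically related — no classification of which preperiodic subvarieties can occur is required. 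Your third and fourth paragraphs should therefore be replaced by this one-step reduction.
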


By definition, marked points $(a_0, \ldots, a_m)$ are {\bf dynamically related along $S$} if there exists a proper closed subvariety $\cY \subset S \times (\bP^1)^{m+1}$ projecting dominantly to $S$ which is preperiodic for the fiber-product $\Phi = f^{[m+1]}$ defined by 
\begin{equation} \label{Phi product}
\Phi(s,z_0,\ldots,z_m) = (s, f_s(z_0), \ldots, f_s(z_{m}))
\end{equation}
and which contains the graph $\Gamma$ of $(a_0, \ldots, a_m)$ over $S$. 

The converse implication to Conjecture \ref{DAOonemap}, that a dynamical relation implies the density of the simultaneous preperiodic points, was proved in \cite{D:stableheight}.  Special cases of Conjecture \ref{DAOonemap} and closely related results were obtained in \cite{Masser:Zannier, Masser:Zannier:2, BD:preperiodic, BD:polyPCF, DWY:QPer1, DWY:Lattes, Ghioca:Hsia:Nguyen, Ghioca:Hsia:Tucker, GHT:rational, GHT:2parameter, Ghioca:Krieger:Nguyen, GKNY, Ghioca:Ye:cubics, Favre:Gauthier:cubics, Favre:Gauthier:book, Ji:Xie:DAO}.

We wish to emphasize one case of Conjecture \ref{DAOonemap}: if a subvariety $V$ of the moduli space $\M_d^1$ contains a Zariski-dense set of postcritically finite maps, then the subvariety $V$ is conjectured to be ``special", meaning that every $(1 + \dim V)$-tuple of critical points is dynamically related along $V$.  One works with an algebraic family $\Phi$ over a parameter space $S$ that maps dominantly to $V$, with marked critical points $\{c_1, \ldots, c_{2d-2}\}$.  If $m = \dim V$, one considers all subsets of these critical points of size $m+1$.  In this form, the conjecture was dubbed the ``Dynamical Andr\'e-Oort Conjecture" in \cite[Chapter 6]{Silverman:moduli}, or ``DAO" in \cite{Ji:Xie:DAO}, because of parallels between the theory of postcritically finite maps and of elliptic curves with complex multiplication.  This case of the conjecture was resolved for algebraic curves in $\M_d^1$ in \cite{Ji:Xie:DAO} but remains open for higher-dimensional subvarieties of $\M_d^1$.

\begin{theorem}  \label{DAO implication}
Conjecture \ref{dimension} implies Conjecture \ref{DAOonemap}.
\end{theorem}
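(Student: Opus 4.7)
The plan is to apply Conjecture \ref{dimension} directly to the diagonal fiber-product $\Phi = f^{[m+1]}$ acting on $S \times (\bP^1)^{m+1}$, with the subvariety $\cX$ taken to be the graph of the marked tuple.

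First I would set $\Gamma \subseteq S \times (\bP^1)^{m+1}$ equal to the graph of $(a_0, \ldots, a_m)$. Since each $a_i$ is a section, $\Gamma$ is an irreducible subvariety flat over $S$ with $\dim \Gamma = m$, projecting isomorphically onto $S$. A parameter $s \in S$ makes $a_0(s), \ldots, a_m(s)$ simultaneously $f_s$-preperiodic if and only if the point $(s, a_0(s), \ldots, a_m(s)) \in \Gamma$ is $\Phi$-preperiodic, so the density hypothesis of Conjecture \ref{DAOonemap} becomes exactly the hypothesis of Conjecture \ref{dimension} applied to $\Gamma$.

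Next I would invoke Conjecture \ref{dimension} to obtain a $\Phi$-special subvariety $\cY \supseteq \Gamma$ with
$$\dim \cY \;\leq\; \dim \Gamma + \dim S \;=\; 2m \;<\; 2m+1 \;=\; \dim\bigl(S \times (\bP^1)^{m+1}\bigr),$$
so $\cY$ is automatically proper, and $\Gamma \subseteq \cY$ combined with $\Gamma \simeq S$ forces $\cY$ to project dominantly onto $S$. Unpacking the $\Phi$-special definition supplies a subvariety $\mathbf{Z} \subseteq (\bP^1)^{m+1}_{\overline{\C(S)}}$ containing the generic fiber of $\cY$ with $\mathbf{\Phi}^n(\mathbf{Z}) = \mathbf{Z}$ for some $n$; its Zariski closure $\cZ \subseteq S \times (\bP^1)^{m+1}$ is then genuinely $\Phi$-periodic and sits as $\Gamma \subseteq \cY \subseteq \cZ$. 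Whenever $\mathbf{Z} \subsetneq (\bP^1)^{m+1}$, the subvariety $\cZ$ is itself proper and directly witnesses the dynamical relation required by Conjecture \ref{DAOonemap}.

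The main obstacle will be the degenerate case $\mathbf{Z} = (\bP^1)^{m+1}$, where $\cZ$ exhausts the ambient space and one must instead certify $\cY$ itself as $\Phi$-preperiodic. In that case the polarizable endomorphism $\mathbf{\Psi}$ is, up to a permutation of the $m+1$ coordinates, a product $(h_0, \ldots, h_m)$ of rational maps on $\bP^1$ over $\overline{\C(S)}$ each commuting with an iterate of $f$. The argument I would give combines the hypothesis that $f$ has dimension $m$ in moduli (so the family is non-isotrivial in the strongest possible sense) with Ritt's classification of commuting rational maps: outside the exceptional Latt\`es, power, and Chebyshev loci every such $h_i$ is forced to be an iterate of $f$, so that $\mathbf{\Psi}$ is an iterate of $\mathbf{\Phi}$ precomposed with a permutation, and $\cY$ becomes $\Phi$-preperiodic automatically. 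The Latt\`es case, where genuine new commuting endomorphisms persist in the family, would require a separate direct analysis using the underlying elliptic scheme structure.
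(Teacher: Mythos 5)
You take essentially the same route as the paper: both arguments reduce matters, via the dimension count from Conjecture~\ref{dimension} applied to the graph $\Gamma$, to showing that $\Gamma$ sits inside a $\Phi$-special subvariety of relative dimension at most $m$, and then both need the fact that for the fiber power $\Phi = f^{[m+1]}$ of a non-isotrivial family on $\bP^1$, every $\Phi$-special subvariety is already $\Phi$-preperiodic. The paper discharges that last fact in one line by citing the discussion in Section~2 of Ghioca--Tucker \cite{Ghioca:Tucker:DMM}, keyed to the observation that Latt\`es maps coming from complex multiplication are rigid in $\M_d^1$, and your explicit case split (between $\mathbf{Z}$ proper and $\mathbf{Z}$ the whole product) is just an unpacked version of the same reduction.

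The gap you flag in the Latt\`es case is genuine as written, but it is closable by precisely the rigidity observation the paper leans on. Since $f$ has dimension $m > 0$ in moduli, it is not isotrivial, so it is not a family of power or Chebyshev maps; and the CM Latt\`es locus is zero-dimensional in $\M_d^1$, so $f$ cannot be a CM Latt\`es family either. That leaves only the possibility of a flexible Latt\`es family (so $m = 1$), where the generic endomorphism ring of the underlying elliptic scheme is $\bZ$: any commuting $h_i$ is then an integer multiplication $[M_i]$, and the preperiodic subvarieties of the product are the same for every such $[M_i]$ (torsion translates of subgroup schemes of the abelian scheme), so $\Phi$-special again coincides with $\Phi$-preperiodic. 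With this supplement your proof is complete and matches the paper's.

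One small imprecision to correct: Ritt's theorem for commuting non-exceptional rational maps yields that $h_i$ and $f$ share a \emph{common} iterate, not that $h_i$ is itself an iterate of $f$. Sharing an iterate (combined with the finite order of the coordinate permutation) is still enough to transfer preperiodicity of a subvariety from $\mathbf{\Psi}$ to $\mathbf{\Phi}$, which is all you need, but the statement in the middle of your last paragraph should be adjusted accordingly.
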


\begin{proof}
To place Conjecture \ref{DAOonemap} in the setting of Conjectures \ref{characterization} and \ref{dimension}, note that the $\Phi$ of \eqref{Phi product} defines a family of polarized endomorphisms of $(\bP^1)^{m+1}$ and so extends to a family of endomorphisms of projective space $\bP^M$ for some dimension $M\geq m+1$.  In particular, the collection $(a_0, \ldots, a_m)$ is dynamically related if and only if the graph $\Gamma$ of this $(m+1)$-tuple lies in a $\Phi$-special subvariety of relative dimension $\leq m$ over $S$.  Indeed, for powers of nonisotrivial maps on $\bP^1$ such as $\Phi$, all $\Phi$-special subvarieties in $S \times (\bP^1)^{m+1}$ will be preperiodic for $\Phi$; see, for example, the discussion in Section 2 of \cite{Ghioca:Tucker:DMM}, as Latt\`es maps coming from complex multiplication are rigid in $\M_d^1$.

Note also that $\Gamma$ contains a Zariski dense set of $\Phi$-preperiodic points if and only if there is a Zariski dense set of $s\in S$ at which $a_0(s), \ldots, a_N(s)$ are simultaneously $f_s$-preperiodic. 

So assume that Conjecture \ref{dimension} holds for $\cX = \Gamma$ and $\Phi = f^{[m+1]}$, and suppose that the hypotheses of Conjecture \ref{DAOonemap} hold.  Then $\Gamma$ must have codimension $\leq m$ in a $\Phi$-special subvariety.  As $m = \dim S = \dim \Gamma$, we must have $r_{\Phi, \Gamma} \leq m$, so that the graph $\Gamma$ must lie in a $\Phi$-special subvariety of relative dimension $\leq m$.  In other words, the marked points are dynamically related.  
\end{proof}

\subsection{Common preperiodic points in $\bP^1$ and uniform bounds}\label{uniform}
For a map $f: \bP^1\to \bP^1$ let $\Prep(f) \subset \bP^1(\C)$ denote its set of preperiodic points.  It was proved in \cite{BD:preperiodic} and \cite{Yuan:Zhang:II} that any pair of maps $f, g: \bP^1 \to \bP^1$ over $\C$, of degrees $>1$, will satisfy either $\Prep(f) = \Prep(g)$ or $|\Prep(f) \cap \Prep(g)| < \infty$.  This result can be viewed as a special case of Conjecture \ref{characterization}, at least when $\deg f = \deg g$.  Indeed, suppose that $\deg(f) = \deg(g) = d \geq 2$, and consider the action of $\Phi := (f,g)$ on $\bP^1\times\bP^1$.  This $\Phi$ is polarizable and so extends to an endomorphism of some $\bP^N$, restricting to the given map $(f,g)$ on an embedded copy of $\bP^1\times\bP^1$.  The common preperiodic points in $\bP^1$ for $f$ and $g$ correspond to preperiodic points of $\Phi$ in the diagonal $\Delta \subset \bP^1\times\bP^1$.  Conjecture \ref{characterization} reduces to the DMM conjecture in this setting, as discussed in \S\ref{DMM}; explicitly, it implies that $\Delta$ is $\Phi$-special if and only if $|\Prep(f) \cap \Prep(g)| = \infty$.  The equivalence of the equality $\Prep(f) = \Prep(g)$ with $\Delta$ being $\Phi$-special follows from \cite[Theorem 1.4]{Yuan:Zhang:II} combined with \cite[Theorem A]{Levin:Przytycki} and \cite[Theorem 1.10]{Mavraki:Schmidt}.

Conjecture \ref{characterization} (or its weaker form, Conjecture \ref{dimension}) predicts what we view as uniform versions of itself.  For example, in this setting of shared preperiodic points for two maps on $\bP^1$, it implies the existence of the uniform bound that we proved in \cite[Theorem 1.1]{DM:commonprep}:

\begin{prop} \label{uniform shared}  
Conjecture \ref{dimension} implies that for each degree $d\geq 2$, there exists a constant $B_d >0$ and a Zariski-open subset $U_d$ of the space $\Rat_d\times\Rat_d$ of all pairs of maps $f,g: \bP^1 \to \bP^1$ of degree $d$ so that 
$$	|\Prep(f) \cap \Prep(g)| \leq B_d$$
for all $(f,g) \in U_d$.
\end{prop}

\begin{proof}
Fix a degree $d>1$ and let $S_d = \Rat_d \times \Rat_d$ be the space of all pairs of maps on $\bP^1$ of degree $d$, and let  
	$$\Phi :  S_d\times (\bP^1 \times\bP^1) \to S_d \times (\bP^1 \times \bP^1)$$
denote the corresponding algebraic family.  For any integer $k \geq 1$, let $\Phi^{[k]}$ denote the $k$-th fiber power 
	$$\Phi^{[k]} = (f,g, \ldots, f,g)$$ 
on $S_d \times (\bP^1\times\bP^1)^{k}$, so that $\Phi^{[1]} = \Phi$, and set
	$$\cX_{k} = S_d\times\Delta^{k}$$ 
in $S_d \times (\bP^1\times\bP^1)^{k}$ for the diagonal $\Delta \subset \bP^1\times\bP^1$.  Note that the codimension of $\cX_k$ in $S_d \times (\bP^1\times\bP^1)^k$ is equal to $k$, for all $k\geq 1$.

In \cite[Lemma 5.4]{DM:commonprep}, we observed that if the conclusion of the proposition were to fail, then $\cX_{k}$ will contain a Zariski dense set of $\Phi^{[k]}$-preperiodic points for all $k \geq 1$.  So it suffices to show that there exists $k \geq 1$ so that $\cX_{k}$ does not contain a Zariski dense set of $\Phi^{[k]}$-preperiodic points.  

To this end, we will show that the relative special dimension $r_{\Phi^{[k]}, \cX_k}$ is equal to $2k$ for any $k\geq 1$; that is, the subvariety $\cX_k$ of $S_d \times (\bP^1\times\bP^1)^k$ is not contained in any $\Phi^{[k]}$-special subvariety other than $S_d \times (\bP^1\times\bP^1)^{k}$ itself.  Then, taking any $k > \dim S_d$, we obtain our desired conclusion from Conjecture \ref{dimension}.

Let $\mathcal{Z}_k\subset S_d \times (\bP^1\times \bP^1)^k$ be an irreducible ${\Phi}^{[k]}$-special variety that contains $\cX_k$.  It must be that $\mathcal{Z}_k$ is preperiodic for $\Phi^{[k]}$, as the map $\Phi_s$ commutes with nothing but its iterates for a generic choice of $s \in S_d$; see for example \cite[Theorem 1.2]{Ye:symmetries}.  From the structural results on invariant subvarieties for product maps on $(\bP^1)^N$ (for any $N$) over fields of characteristic 0, proved in \cite{Medvedev:Scanlon} (and reproved by different methods in \cite{Ghioca:Nguyen:Ye:DMM1}), it follows that $\mathcal{Z}$ must be all of $S_d \times (\bP^1 \times\bP^1)^{k}$.  That is, $r_{\Phi^{[k]}, \cX_k} = 2k$, and the proof is complete.
\end{proof}

\bigskip
\section{Further consequences of Conjecture \ref{characterization}: sparsity of special subvarieties}
\label{further}

For integers $N\geq 1$ and $d\geq 2$, we let $\M_d^N$ denote the affine complex-algebraic variety which is the space of conjugacy classes of endomorphisms on $\bP^N$ with degree $d$ \cite{Silverman:moduli}.  In this section we show that the following conjecture is a consequence of Conjecture \ref{characterization} (and in fact, of its weaker form, Conjecture \ref{dimension}).  The case where $\cX$ is the critical locus of $\Phi$ is a recent theorem of \cite{Gauthier:Taflin:Vigny}.

\begin{conjecture}\label{sparsity}
Let $N>1$ and $\Phi: S\times \mathbb{P}^N\to S\times \mathbb{P}^N$ be an algebraic family of endomorphisms of degree $d >1$ for which the induced map $S \to \M_d^N$ is dominant. Let $\mathcal{X}\subset S\times \mathbb{P}^N$ be an irreducible flat family of subvarieties over $S$ that is neither all of $S\times\bP^N$ nor has codimension $N$. Then the set of $s\in S(\bC)$ such that $X_s$ is $\Phi_s$-special is not Zariski dense in $S$.
\end{conjecture}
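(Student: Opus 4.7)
The plan is to argue by contradiction: suppose $E := \{s \in S(\bC) : X_s \text{ is } \Phi_s\text{-special}\}$ is Zariski dense in $S$, while $0 < \dim_S \cX < N$. For each $s \in E$, the $\Phi_s$-special subvariety $X_s \subset \bP^N$ contains a Zariski-dense set of $\Phi_s$-preperiodic points, by the standard direction of dynamical Manin--Mumford for polarized endomorphisms \cite{Fakhruddin:selfmaps, Briend:Duval, Dinh:Sibony:allure}. Since $E$ is Zariski-dense in $S$ and $\cX \to S$ is flat, $\cX$ itself carries a Zariski-dense set of $\Phi$-preperiodic points: any proper closed subvariety $\cX' \subsetneq \cX$ containing them would satisfy $\cX'_s = X_s$ (by density in fibers and irreducibility of $X_s$) for a dense set of $s \in E$, forcing $\dim \cX' = \dim \cX$ and hence $\cX' = \cX$. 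Conjecture \ref{dimension} then yields a $\Phi$-special subvariety $\cY \subset S \times \bP^N$ with $\cX \subset \cY$ and $\dim \cY - \dim \cX \leq \dim S$.

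Next, the dominance hypothesis constrains $\cY$. The generic fiber $\mathbf{\Phi}$ over $\bC(S)$ corresponds to a generic point of $\M_d^N$, so it behaves as a generic endomorphism of $\mathbf{P^N}$ of degree $d$. For $N > 1$, two rigidity assertions apply: polarizable endomorphisms commuting with an iterate of $\mathbf{\Phi}$ on an invariant subvariety of $\mathbf{P^N}$ are forced to be iterates of $\mathbf{\Phi}$, and $\mathbf{\Phi}$-preperiodic subvarieties of $\mathbf{P^N}$ are forced to be points or $\mathbf{P^N}$ itself. Together these imply that the generic fiber $\mathbf{Y}$ of $\cY$ is either a point or $\mathbf{P^N}$, so $\dim_S \cY \in \{0, N\}$. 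If $\dim_S \cY = 0$, then $\cX \subset \cY$ gives $\dim_S \cX = 0$, contradicting $\dim_S \cX > 0$; so $\cY = S \times \bP^N$.

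To dispose of this remaining case, I would pass back to the fiber level: on the Zariski-open preimage $U \subset S$ of the generic locus in $\M_d^N$, the same rigidity applies to each $\Phi_s$, so the only $\Phi_s$-special subvarieties of $\bP^N$ are preperiodic points and $\bP^N$. Zariski density of $E$ forces $E \cap U \neq \emptyset$, and for any $s \in E \cap U$ the fiber $X_s$ must be a point or $\bP^N$. Since $\dim X_s = \dim_S \cX$ on a Zariski-open set of $S$, this forces $\dim_S \cX \in \{0, N\}$, contradicting $0 < \dim_S \cX < N$.

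The main obstacle is establishing the rigidity classification rigorously. The nonexistence of nontrivial positive-dimensional proper $f$-preperiodic subvarieties for a generic $f \in \M_d^N$ with $N > 1$ is a deep dynamical Manin--Mumford-type statement, and a complete argument must also address commuting polarizable endomorphisms, likely via arguments rooted in \cite{Fakhruddin:selfmaps} on polarized endomorphisms and Ritt-type rigidity results in higher dimension. The role of Conjecture \ref{dimension} in the plan is to translate the fiberwise density of special subvarieties into a global $\Phi$-special containing $\cX$, which is where the dominance-based rigidity can be applied at the family level.
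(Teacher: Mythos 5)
Your argument tracks the paper's through the first two steps (density of preperiodic points in $\cX$, and applying Conjecture \ref{dimension}), but the way you apply Conjecture \ref{dimension} does not yield enough, and the patch you propose is circular. Applying Conjecture \ref{dimension} directly to $\cX$, with the correct family-level rigidity forcing $\cY = S\times\bP^N$ (this part is fine, via Fakhruddin's theorem applied to the \emph{family}), only gives $\dim\cY - \dim\cX = N - \dim_S\cX \le \dim S$, which is vacuously true once $\dim S \geq N$. You correctly identify that something more is needed, but the proposed fix --- concluding from dominance that for a generic individual $s$ the only $\Phi_s$-special subvarieties of $\bP^N$ are points and $\bP^N$ --- is not a known fact. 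It is precisely an open Dynamical Manin--Mumford statement for a single generic endomorphism on $\bP^N$, $N>1$; Fakhruddin's result classifies families of invariant subvarieties dominating $S$, not invariant subvarieties of a fixed $\Phi_{s_0}$. Indeed, if one knew that a generic $f\in\M_d^N$ admits no positive-dimensional proper preperiodic subvariety, the sparsity conjecture would be nearly immediate; that is essentially what one is trying to establish (and the case $\cX = \Crit(\Phi)$ is the recent Gauthier--Taflin--Vigny theorem, which required substantial work).

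The paper resolves this by a fiber-power amplification that you are missing. Instead of applying Conjecture \ref{dimension} to $\cX$ once, it applies it to the $m$-fold fiber power $\cX_m \subset S\times(\bP^N)^m$ for $\Phi^{[m]}$. If $X_s$ is $\Phi_s$-special for a dense set of $s$, then $X_s^m$ has dense $\Phi_s^{[m]}$-preperiodic points and hence $\cX_m$ has a Zariski-dense set of $\Phi^{[m]}$-preperiodic points. An induction on $m$ (using Fakhruddin's family-level theorem, and slicing over families of periodic points in the first $m-1$ coordinates) shows the relative special dimension satisfies $r_{\Phi^{[m]},\cX_m} = mN$. Then Conjecture \ref{dimension} gives $\dim S + m\dim_S\cX = \dim\cX_m \geq mN$, i.e.\ $\dim S \geq m(N - \dim_S\cX) \geq m$ since $N - \dim_S\cX \geq 1$. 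Taking $m > \dim S$ is a contradiction. The linear-in-$m$ growth of $r_{\Phi^{[m]},\cX_m}$ with slope $N$, versus the slope-$\dim_S\cX < N$ growth of $\dim\cX_m$, is what makes the one-time additive loss of $\dim S$ eventually negligible; this is the idea your approach lacks, and it sidesteps any rigidity claim about individual maps.
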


\begin{remark}
Conjecture \ref{sparsity} fails if we do not assume that $S \to \M_d^N$ is dominant.  For example, $\Phi$ could be a family of regular polynomial maps on $\bP^2$ (with coordinates $(x:y:z)$), where the line at infinity $\cX = \{z=0\}$ is invariant for $\Phi_s$, for all $s \in S$.  Even if we further assume that $\mathcal{X}$ is not $\Phi$-special, the conjecture can fail without assuming dominance of $S \to \M_d^N$; for example, the family $\mathcal{X}$ of lines $\{x=s\, y\}$ in $\bP^2$, for $s\in\C$, is not special under $\Phi(s,(x:y:z))=(s, (x^2:y^2:z^2))$ but becomes $\Phi_s$-preperiodic for each root of unity $s$. 
\end{remark}

\begin{theorem} \label{sparsity theorem}  
Conjecture \ref{dimension} implies Conjecture \ref{sparsity}. 
\end{theorem}

\begin{proof}
If for some $s\in S(\bC)$ the subvariety $X_s\subset \mathbb{P}^N$ is $\Phi_s$-special, then $X_s$ must contain a Zariski dense set of preperiodic points (see the discussion in \S\ref{DMM}).  It follows that, for any integer $m\geq 1$, if we consider the product map $\Phi_s^{[m]}= (\Phi_s, \ldots, \Phi_s)$ on $(\bP^N)^m$, then the $\Phi_s^{[m]}$-preperiodic points are also Zariski dense in the product $X_s^m$.  

As in the proof of Proposition \ref{uniform shared}, we consider fiber powers of the family $\Phi$.  Let
	$$\Phi^{[m]}: S\times (\bP^N)^m \to S\times (\bP^N)^m$$
denote the algebraic family formed by the products, for integers $m\geq 1$.  Note that $\Phi^{[m]}$ is a polarizable endomorphism, so it extends as an algebraic family of morphisms
	$$\Phi_{N,m}:  S \times\bP^{D(N,m)} \to S \times\bP^{D(N,m)}$$
for some large $D(N,m)$, restricting to a Zariski-open subset of $S$ if necessary.  The subvariety $S \times (\bP^N)^m$ will sit inside $S \times\bP^{D(N,m)}$ as a $\Phi_{N,m}$-invariant subvariety, so we will restrict our attention to $\Phi^{[m]}$ itself.  

Let $\cX_m$ denote the $m$-th fiber power of $\mathcal{X}$ over $S$, as a subvariety of $S\times (\bP^N)^m$, so each fiber of the projection $\cX_m \to S$ is of the form $X_s^m$.  Let $r_{N,m}$ be the relative special dimension of $\cX_m$, so that 
	$$r_{N,m} := r_{\Phi^{[m]},\cX_m} \leq mN = \dim \, (\bP^N)^m.$$ 
If the set of $s\in S(\bC)$ such that $\mathcal{X}_s$ is $\Phi_s$-special is Zariski dense in $S$, then the $\Phi^{[m]}$-preperiodic points will be Zariski dense in $\cX_m$ for all $m\geq 1$.  Conjecture \ref{dimension} implies that
	$$\dim \cX_m  \geq r_{N,m}$$
for all $m\geq 1$.  We will see, however, that $r_{N,m}=mN$ for all $m$, so that if $m>\dim S$, then 
	$$r_{N,m} > \dim \cX_m=\dim S+ m\dim_S(\mathcal{X}),$$ 
which is a contradiction.  In other words, the conclusion of Conjecture \ref{sparsity} holds.

To see that $r_{N,m} =mN$, we work inductively on $m$ and appeal to a result of Fakhruddin \cite[Theorem 1.2]{Fakhruddin:generic}.  He proved that any irreducible Zariski-closed subvariety $\cY \subset S \times \bP^N$ which projects dominantly to $S$ and is invariant for $\Phi$ must either be all of $S \times \bP^N$ or have codimension $N$.  Moreover, if $\cY$ has codimension $N$, then it must parameterize a finite collection of preperiodic points for $\Phi$. Note that there are no polarizable endomorphisms $\Psi$ commuting with $\Phi$ over all of $S$, except the iterates of $\Phi$ itself, so $\Phi$-special is the same as $\Phi$-preperiodic in this setting.  

For $m=1$, we have $\cX_1:=\cX$ and by  \cite[Theorem 1.2]{Fakhruddin:generic} the only $\Phi$-special subvariety containing $\cX$ is $S\times\bP^N$. So $r_{N,1} =N$.  Now fix $m>1$, and assume that $r_{N,j} = jN$ for all $j<m$.  Suppose that $\mathcal{Z}_m$ is a $\Phi^{[m]}$-special subvariety in $S \times (\bP^N)^m$ containing $\cX_m$.  We aim to show that $\mathcal{Z}_m=S\times(\bP^N)^m$.  

Let 
	$$p: S \times (\bP^N)^m \to S \times (\bP^N)^{m-1}$$ 
denote the projection forgetting the last coordinate $\bP^N$.  By the induction hypothesis, we have $p(\mathcal{Z}_m)= S\times (\bP^N)^{m-1}$, because $p(\mathcal{Z}_m)$ must be $\Phi^{[m-1]}$-special and contain $\cX_{m-1}$.  Let $\mathcal{P}_1 \subset S \times (\bP^N)^{m-1}$ be a subvariety of dimension $= \dim S$ which projects surjectively to $S$ and is pointwise fixed by $\Phi^{[m-1]}$.  (In other words, $\mathcal{P}_1$ is a multisection of the projection from $S \times (\bP^N)^{m-1}$ to $S$, parameterizing a collection of fixed points.)  For each $x \in \mathcal{P}_1$, consider 
	$$\mathcal{Z}^{x}_m := p_m \left(p^{-1}(x) \cap \mathcal{Z}_m\right) \subset \bP^N,$$ 
where $p_m: S \times (\bP^N)^m \to \bP^N$ is the projection to the last coordinate.  Note that 
		$$\Phi^{[m]}(x, y) \; \in \;  p^{-1}(x) \cap \Phi^{[m]}(\mathcal{Z}_m)$$
for every $x \in \mathcal{P}_1$ and $y \in \mathcal{Z}^{x}_m$, because $x$ is a fixed point.  As $\mathcal{Z}_m$ is preperiodic under $\Phi^{[m]}$, and letting $\Phi_{\mathcal{P}_1}$ denote the family $\Phi$ of maps on $\bP^N$ over $\mathcal{P}_1$ via the base change $\mathcal{P}_1 \to S$, we see that $\{\mathcal{Z}^{x}_m: x \in \mathcal{P}_1\}$ defines a family of $\Phi_{\mathcal{P}_1}$-preperiodic subvarieties in $\bP^N$.  Applying Fakhruddin's theorem \cite[Theorem 1.2]{Fakhruddin:generic} to each irreducible component of this family, we see that $\{\mathcal{Z}^{x}_m: x \in \mathcal{P}_1\}$ is either a family of points or all of $\bP^N$ for all $x \in \mathcal{P}_1$.

In the former case, we deduce that $\codim \mathcal{Z}_m = N$ in $S \times (\bP^N)^m$, because the dimension of the fibers of $p|_{\mathcal{Z}_m}$ is upper semi-continuous over $S \times (\bP^N)^{m-1}$.  Again applying \cite[Theorem 1.2]{Fakhruddin:generic}, the irreducibility of $\mathcal{Z}_m$ means that it defines a family of preperiodic points in $\bP^N$ over all of $S \times (\bP^N)^{m-1}$ (viewed as an extended parameter space for the family of maps $\Phi_s$ via the projection $p$).  But the fibers of the projection $p|_{\mathcal{Z}_m}$ over $\cX_{m-1}$ must contain the corresponding fibers of $\cX$, a contradiction. 

We conclude that $\mathcal{Z}^{x}_m = \bP^N$ for all $x \in \mathcal{P}_1$.  Recall that we aim to show that $\mathcal{Z}_m = S \times (\bP^N)^m$.  We now repeat this argument, replacing $\mathcal{P}_1$ with any family $\mathcal{P}$ of marked periodic points for $\Phi^{[m-1]}$ in $S \times (\bP^N)^{m-1}$ and work with an iterate of $\Phi$, and we deduce that the fibers of $\mathcal{Z}_m$ over $\mathcal{P}$ are also all of $\bP^N$. As the periodic points of $\Phi^{[m-1]}$ are Zariski-dense in $S \times (\bP^N)^{m-1}$, and as the fibers of $p|_{\mathcal{Z}_m}$ are equal to $\bP^N$ over this Zariski-dense set, we conclude that every fiber of $p|_{\mathcal{Z}_m}$ is equal to $\bP^N$.  Therefore, $\mathcal{Z}_m = S \times (\bP^N)^m$ and so the relative special dimension of $\cX_m$ is $r_{N,m} = m N$.
\end{proof}

\subsection{PCF density or sparsity in the moduli space}  
Conjecture \ref{sparsity} includes as a special case the sparsity of postcritically finite maps in the moduli space $\M_d^N$ of maps $f: \bP^N\to \bP^N$, for all dimensions $N>1$, as conjectured in \cite{Ingram:Ramadas:Silverman} and proved recently in \cite{Gauthier:Taflin:Vigny}.  That is, it is now known that the set of all (conjugacy classes of) maps for which the critical hypersurface is preperiodic is contained in a proper Zariski-closed subset of the moduli space $\M_d^N$ \cite[Theorem B]{Gauthier:Taflin:Vigny}.  The following statement is an immediate consequence of Theorem \ref{sparsity theorem}, setting $\mathcal{X}=\Crit(\Phi)$ to be the critical locus of the family $\Phi$:

\begin{prop} \label{critical sparsity}
For every degree $d\geq 2$ and dimension $N \geq 2$, Conjecture \ref{dimension} implies that the set of postcritically finite maps $f: \bP^N \to \bP^N$ is contained in a proper Zariski-closed subset of moduli space $\M_d^N$.
\end{prop}

For $N=1$, it is known that the postcritically finite maps form a Zariski-dense subset of the moduli space $\M_d^1$ of maps on $\bP^1$, in any degree $d\geq 2$; see, for example, \cite[Main Theorem]{Buff:Epstein:PCF} or \cite[Theorem A]{D:KAWA}.  This fact can also be seen as a special case of Conjecture \ref{characterization}: 

\begin{prop} \label{critical sparsity}
In dimension $N=1$ and for every degree $d\geq 2$, Conjecture \ref{characterization} implies that the set of postcritically finite maps is Zariski dense in the moduli space $\M_d^1$.
\end{prop}

\begin{proof}
Suppose that $\Phi: S\times\bP^1 \to S \times\bP^1$ is an algebraic family of endomorphisms of $\bP^1$ degree $d >1$ for which the induced map $S \to \M_d^1$ is dominant.  The cardinality of $\Crit(\Phi_s)$ is $\leq 2d-2$ for every $s \in S$.  This implies that 
\begin{equation} \label{inequality in dim 1}
	r_m:= r_{\Phi^{[m]}, \Crit(\Phi)^{[m]}} \leq 2d-2
\end{equation}
for all $m \geq 1$, where $\Crit(\Phi)^{[m]}$ is the $m$-th fiber power of the critical locus, because the elements of the set $\Crit(\Phi_s)^m \subset (\bP^1)^m$ have at most $2d-2$ distinct coordinate entries.  (Note that the diagonal in $\bP^1\times\bP^1$ is invariant for the product map $(\Phi_s, \Phi_s)$.) 

Recall that the bifurcation measure is nonzero on the moduli space $\M_d^1$; indeed, it was first proved in \cite[Proposition 6.3]{Bassanelli:Berteloot} that a rigid Latt\`es map must lie in the support of the measure. It follows that
\begin{equation} \label{relevant current N=1}
	\hat{T}_{\Phi^{[2d-2]}}^{\wedge (2d-2)} \wedge [\Crit(\Phi)^{[2d-2]}] \not=0,
\end{equation}
because this current projects to the bifurcation measure on $\M_d^1$ (via first projecting to the base $S$ and then via the natural map to $\M_d^1$); see \cite[Proposition 1.4]{Gauthier:Taflin:Vigny} for the computation relating the current in \eqref{relevant current N=1} to a bifurcation current in $S$. Let $\cX$ be an irreducible component of $\Crit(\Phi)^{[2d-2]}$ for which $\hat{T}_{\Phi^{[2d-2]}}^{\wedge (2d-2)} \wedge [\cX] \not=0$.  Proposition \ref{rank inequality} then implies that $r_{\Phi^{[2d-2]}, \cX} \geq 2d-2$ for this component; combined with \eqref{inequality in dim 1}, we have equality $r_{2d-2} = 2d-2$.  In particular, the coordinates of points in a fiber of $\cX$ over $S$, in $(\bP^1)^{2d-2}$, are generally distinct.  The implication (2) $\implies$ (1) of Conjecture \ref{characterization} tells us that preperiodic points of $\Phi^{[2d-2]}$ are Zariski-dense in $\cX$.  But, over a Zariski-open and -dense subset $U$ of $S$ where the $2d-2$ critical points are distinct, the existence of a preperiodic point in $\cX$ over $s \in U$ means that each of the $2d-2$ critical points for $\Phi_s$ is preperiodic.  In other words, Conjecture \ref{characterization} implies that the set of postcritically finite maps is Zariski-dense in $\M_d^1$.
\end{proof}

\bigskip
\section{Minimal $\Phi$-rank and stability}
\label{stability}

Let $\Phi: S \times\bP^N \to S \times\bP^N$ be an algebraic family of endomorphisms of degree $>1$.  In this section, we introduce the notion of stability for subvarieties $\cX \subset S\times\bP^N$, following \cite{Gauthier:Vigny}.  We discuss the Geometric Dynamical Bogomolov Conjecture of Gauthier and Vigny \cite[Conjecture 1.9]{Gauthier:Vigny}, which is a conjectural generalization of theorems proved in \cite{Gao:Habegger:Bog, CGHX} for abelian varieties over function fields of characteristic 0.  It is related to our Conjecture \ref{characterization} but is independent except in certain cases.  We follow this discussion with a look at the special case of $J$-stability of $\Phi$ and the bifurcation current introduced in \cite{Bassanelli:Berteloot} and studied further in \cite{Berteloot:Bianchi:Dupont}.

\subsection{Stability} \label{stabilitysub}
Let $\cX\subset S \times \bP^N$ be an irreducible subvariety of codimension $p$ which is flat over $S$. Gauthier and Vigny studied the currents $\hat{T}_{\Phi}^{\wedge k} \wedge [\cX]$, for $k\geq 1$, in \cite{Gauthier:Vigny} and defined an important notion:  the subvariety $\cX$ is said to be {\bf $\Phi$-stable} if 
	$$\hat{T}_{\Phi}^{N+1-p} \wedge [\cX] = 0;$$ 
they prove that this is equivalent to the vanishing of the canonical ${\bf \Phi}$-height of ${\bf X}$ (for the height defined over the function field $\C(S)$, introduced by Gubler \cite{Gubler:local, Gubler:Bogomolov, Gubler:Equidistribution}) \cite[Theorem B]{Gauthier:Vigny}. 

If $\cX$ has codimension 1 in a $\Phi$-special variety $\mathcal{Y}$ but is not $\Phi$-special itself, so that $r_{\Phi, \cX} = \dim_S\cX + 1$, then condition (2) of Conjecture \ref{characterization} is exactly the condition of instability.  The conjecture proposes that instability, in this case, is equivalent to the prevalence of $\Phi$-preperiodic points in $\cX$.  

But for general subvarieties $\cX$, of arbitrary dimension, instability does not guarantee the existence of any preperiodic points, as the following example illustrates.

\begin{example}
Let $S = \C$, and consider the family $\Phi: S \times \bP^2 \to S \times \bP^2$ of degree $d =2$ defined by
	$$\Phi(s, x, y) = (s, x^2 + s, y^2 + s + 10)$$
in affine coordinates $(x,y)$ in $\bA^2 \subset \bP^2$.  Set 
	$$\cX = \{(s, 0, 0):  s \in S\}.$$
Then 
$$\hat{T}_\Phi = dd^c G \quad \mbox{ for } \quad G(s,x,y) = \max\{G_s(x), G_{s+10}(y)\}$$
on $S \times \C^2$, where $G_c(z) := \lim_{n\to\infty} 2^{-n} \log \max\{|f^n_c(z)|, 1\}$ is the escape-rate function for the polynomial $f_c(z) = z^2 + c$.  It follows that 
	$$\hat{T}_\Phi|_{\cX} = dd^c G(s, 0,0).$$
In particular, since $s \mapsto G(s,0,0)$ is subharmonic, nonconstant, and bounded from below, we see that it cannot be harmonic, and therefore $\hat{T}_\Phi \wedge [\cX] \not= 0$.  In other words, the subvariety $\cX$ is unstable over $S$, in the sense of \cite{Gauthier:Vigny}.  On the other hand, we know that $G_c(0) = 0$ if and only if $c$ is in the Mandelbrot set; it follows that $G(s,0,0)>0$ for all $s \in \C$, because there are no parameters $s$ where both $s$ and $s+10$ lie in the Mandelbrot set.  In particular, there are no $\Phi$-preperiodic points in $\cX$.  
Note that $\cX$ is not contained in any nontrivial $\Phi$-special subvarieties of $S \times\bP^2$, so that $r_{\Phi, \cX} = 2$.  We can see immediately that 
	$$\hat{T}_\Phi^{\wedge 2} \wedge [\cX] = 0$$
because $\dim \cX = 1 < 2$, thus supporting the equivalence of Conjecture \ref{characterization}.
\end{example}

\subsection{Minimal $\Phi$-rank} \label{min rank}
From Proposition \ref{rank inequality}, we see that a subvariety $\cX \subset S \times \bP^N$ is $\Phi$-stable if and only if it has {\bf minimal $\Phi$-rank}, meaning that 
	$$\rank_\Phi(\cX) = \dim_S \cX.$$ 
	
\subsection{The Geometric Dynamical Bogomolov conjecture}
Gauthier and Vigny formulated a conjecture that aims to characterize the stable subvarieties $\cX \subset S\times\bP^N$; that is, the subvarieties $\cX$ of minimal $\Phi$-rank.

To formulate their conjecture in our terminology, we let $K = \C(S)$ be the function field of $S$ and introduce a few definitions.  Recall that a family $\Phi: S\times\bP^N\to S\times \bP^N$ is {\bf isotrivial} if, after a base change $S' \to S$, we can change coordinates by a family of automorphisms of $\bP^N$ so that $\Phi_s$ becomes independent of the parameter $s \in S'$.  Similarly, a polarized endomorphism ${\bf \Phi}: {\bf Y} \to {\bf Y}$ defined over $\overline{K}$ is {\bf isotrivial} if there exists a model family over $\C$ with an isotrivial extension to some $\bP^N$.  A subvariety ${\bf W} \subset {\bf Y}$ is $\mathbf{\Phi}$-{\bf isotrivial} if it is itself independent of the parameter in the model over $\C$ after the coordinate change has been made.  An irreducible subvariety $\cX \subset S \times \bP^N$ is said to {\bf come from an isotrivial factor} of $\Phi$ if there exist integers $k_1 \ge 1$ and $k_2\geq 0$, an isotrivial polarized endomorphism ${\bf \Psi}: {\bf Y} \to {\bf Y}$ with $\mathbf{\Psi}$-isotrivial subvariety ${\bf W} \subset {\bf Y}$, a subvariety $\mathbf{Z}\subset \mathbf{P^N}$ defined over $\overline{K}$ with $\mathbf{\Phi}^{k_1}(\mathbf{Z})=\mathbf{Z}$, and a dominant rational map $p: \mathbf{Z}\dashrightarrow \mathbf{Y}$ such that the following diagram commutes
\[ \begin{tikzcd}
\mathbf{Z} \arrow{r}{\mathbf{\Phi}^{k_1}}  \arrow[d,dashed,"p"] & \mathbf{Z}  \arrow[d,dashed,"p"] \\%
\mathbf{Y} \arrow{r}{\mathbf{\Psi}}& \mathbf{Y}
\end{tikzcd}
\]
and so that $\mathbf{\Phi}^{k_2}(\mathbf{X})=p^{-1}(\mathbf{W})$.

\begin{conjecture}  \cite{Gauthier:Vigny} \label{geomBog}
Let $\Phi: S\times\bP^N\to S \times\bP^N$ be an algebraic family of morphisms of degree $> 1$, and let $\mathcal{X}\subset S\times\bP^N$ be an irreducible subvariety which is flat over $S$.  The following are equivalent.
\begin{enumerate}
\item $\widehat{T}^{\wedge \dim_S(\mathcal{X})+1}_{\Phi}\wedge[\mathcal{X}] = 0$
\item Either $\cX$ is $\Phi$-special or it comes from an isotrivial factor. 
\end{enumerate}
\end{conjecture}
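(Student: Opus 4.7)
The plan is to establish the two implications separately. For (2) $\Rightarrow$ (1), I would handle the two cases of condition (2) in turn. If $\cX$ is itself $\Phi$-special then $r_{\Phi,\cX} = \dim_S \cX$, and applying Proposition \ref{rank inequality} with $\cY = \cX$ directly yields $\hat{T}_\Phi^{\wedge(1+\dim_S\cX)} \wedge [\cX] = 0$. If $\cX$ comes from an isotrivial factor through the diagram involving $\mathbf{\Psi}: \mathbf{Y}\to\mathbf{Y}$ and the dominant rational map $p$, I would pass to a finite cover $S' \to S$ on which $\mathbf{\Psi}$ becomes a constant family $\mathrm{id} \times \psi$ on $S' \times Y$. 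The pulled-back canonical current then equals the pullback of the invariant current $T_\psi$ from $Y$, so its restriction to the isotrivial subvariety $S' \times W$ vanishes at wedge-power $1+\dim W$ by the dimension count of Lemma \ref{rank equality}. Functoriality of Green currents under $p$, combined with the identification $\hat{T}_\Phi = \hat{T}_{\Phi^{k_1}}$ on $\mathbf{Z}$ and the relation $\mathbf{\Phi}^{k_2}(\mathbf{X}) = p^{-1}(\mathbf{W})$, then transfers this vanishing back to $\cX$.

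For (1) $\Rightarrow$ (2), which is the substantive direction, I would begin by invoking \cite[Theorem B]{Gauthier:Vigny} to reinterpret condition (1) as the vanishing of the canonical $\mathbf{\Phi}$-height of $\mathbf{X}$ over $\C(S)$. Reducing to the arithmetic setting via a standard spreading-out and specialization argument, one aims to produce a Zariski-dense set of $\mathbf{\Phi}$-small points on $\mathbf{X}$. In the non-degenerate regime, where $\hat{T}_\Phi^{\wedge \dim \cX} \wedge [\cX] \neq 0$, the quasiprojective arithmetic equidistribution theorem of Yuan-Zhang \cite[Theorem 5.4.3]{Yuan:Zhang:quasiprojective} forces these small points to equidistribute against the canonical measure, and one should argue they must in fact be $\mathbf{\Phi}$-preperiodic, concluding via the Dynamical Manin-Mumford framework of \S\ref{DMM} that $\cX$ is $\Phi$-special. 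In the degenerate regime, which is the one actually imposed by stability whenever $\dim S > 0$ since condition (1) forces $\rank_\Phi(\cX) = \dim_S \cX < \dim \cX$, the strategy is to pass to a quotient dynamical system along the directions in which $\hat{T}_\Phi$ fails to have full rank on $\cX$, and to show that the resulting fibration is isotrivial.

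The main obstacle is precisely the treatment of the degenerate case: constructing the requisite quotient dynamical system and proving its isotriviality. In the abelian setting of \cite{Gao:Habegger:Bog, CGHX}, the analogous step relies crucially on Ax-Schanuel-type theorems for variations of mixed Hodge structures and on the geometry of the Betti map, tools that have no direct analog for general polarized families on $\bP^N$. A potential substitute would be a dynamical Ax-Schanuel principle, or a theory of dynamical foliations on $S \times \bP^N$ cut out by the degeneracy loci of wedge powers of $\hat{T}_\Phi$ and governed by the Monge-Amp\`ere structure of the canonical current; developing such a framework, and in particular showing that the leaves along which $\hat{T}_\Phi$ degenerates must be $\Phi$-isotrivial in the sense of \S\ref{stabilitysub}, is the central challenge in completing the implication.
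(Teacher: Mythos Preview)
The statement you are attempting to prove is labeled a \emph{conjecture} in the paper, and indeed the paper offers no proof of it. Immediately after stating Conjecture~\ref{geomBog}, the authors write: ``That (2) implies (1) in Conjecture~\ref{geomBog} is straightforward to prove, so the challenge is the conjectural (1) $\implies$ (2).'' They then list the special cases in which (1) $\implies$ (2) is known: dimension $N=1$ via \cite{D:stableheight}, subvarieties with $\dim_S\cX = 0$ via \cite{Gauthier:Vigny}, certain skew products on $\bP^2$, split maps on $(\bP^1)^\ell$ via \cite{Mavraki:Schmidt, Mavraki:Schmidt:Wilms}, and abelian schemes via \cite{Gao:Habegger:Bog, CGHX}. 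There is no general argument in the paper to compare your proposal against.

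Your treatment of (2) $\implies$ (1) is in the right spirit and matches the paper's assertion that this direction is straightforward, though your isotrivial-factor case is sketched loosely: the map $p$ is only rational and dominant, so ``functoriality of Green currents under $p$'' requires resolving indeterminacy and tracking the relative dimensions carefully (in particular, relating $\dim W$ to $\dim_S\cX$ through the fiber dimension of $p$). These are technicalities rather than genuine obstacles.

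Your discussion of (1) $\implies$ (2), however, is not a proof but an honest acknowledgment that the implication is open. You correctly identify that the abelian-variety proofs rest on Ax--Schanuel and Betti-map geometry with no known dynamical substitute, and you propose a hypothetical ``dynamical Ax--Schanuel'' or foliation theory as the missing ingredient. This is an accurate diagnosis of why the conjecture remains open, but it does not advance toward a proof. In particular, your appeal to equidistribution in the non-degenerate regime does not help here: condition (1) of the conjecture is precisely the \emph{stable} (minimal-rank) case $\rank_\Phi(\cX) = \dim_S\cX$, so when $\dim S > 0$ you are always in what you call the degenerate regime, and the equidistribution step is never reached.
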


As mentioned in \S\ref{stabilitysub}, Gauthier and Vigny proved that condition (1) is equivalent to the vanishing of the canonical height $\hat{h}_{\bf \Phi}({\bf X})$ over the function field $K = \C(S)$ \cite[Theorem B]{Gauthier:Vigny}.  Combining this with Gubler's Inequality \cite[Lemma 4.1, Proposition 4.3]{Gubler:Bogomolov}, we know that, when $\dim S = 1$, condition (1) is also equivalent to the existence of a Zariski-dense set of geometrically small points in ${\bf X}$; that is, the existence of a generic sequence of points ${\bf x}_n \in {\bf X}(\overline{K})$ with $\hat{h}_{\bf \Phi}({\bf x}_n) \to 0$ as $n\to \infty$. 

That (2) implies (1) in Conjecture \ref{geomBog} is straightforward to prove, so the challenge is the conjectural (1) $\implies$ (2).  This implication is known in dimension $N=1$ \cite[Theorem 1.1]{D:stableheight} and for subvarieties $\cX$ with $\dim_S\cX =0$ in arbitrary dimension $N$ \cite[Theorem A]{Gauthier:Vigny}; see also \cite{CH1, CH2}.  Gauthier and Vigny also proved that (1) implies (2) in Conjecture \ref{geomBog} when $\mathbf{\Phi}:\mathbf{P}^2\to\mathbf{P}^2$ is a non-isotrivial polynomial skew-product with an isotrivial first coordinate \cite[Theorem 29]{Gauthier:Vigny:v2}. Mavraki--Schmidt \cite[Theorem 4.1, Theorem 4.3]{Mavraki:Schmidt}, building on their work with Wilms \cite{Mavraki:Schmidt:Wilms}, proved that Conjecture \ref{geomBog} holds when $\dim S = 1$, $\Phi$ is defined over $\Qbar$, and $\mathbf{\Phi}$  preserves a power of the projective line $(\mathbf{P}^1)^{\ell}$ with the additional assumption that if $\ell\ge 3$, $\mathbf{\Phi}$ has no isotrivial factor. In the case of an abelian scheme, that is, where $\Phi$ preserves a family of abelian varieties $\mathcal{A}$ over $S$, inducing a homomorphism, and $\mathcal{X}$ is contained in $\mathcal{A}$, the conjecture was proved in \cite{Gao:Habegger:Bog, CGHX}. 

\begin{remark} \label{minimal or maximal}
Note that Conjecture \ref{geomBog} and Conjecture \ref{characterization} have distinct goals.  Conjecture \ref{geomBog} is a characterization of subvarieties $\cX$ of {\em minimal} $\Phi$-rank, i.e., $\rank_\Phi(\cX) = \dim_S\cX$, while Conjecture \ref{characterization} is a characterization of subvarieties $\cX$ of {\em maximal} $\Phi$-rank, i.e., $\rank_\Phi(\cX) = r_{\Phi, \cX}$.  (Recall that we gave bounds on $\rank_\Phi(\cX)$ in Proposition \ref{rank inequality}.)  A subvariety $\cX$ will have both maximal and minimal $\Phi$-rank if and only if $\dim_S \cX = r_{\Phi, \cX}$, meaning that $\cX$ is itself $\Phi$-special.  And in that case, the subvariety $\cX$ contains a Zariski-dense set of {\em persistently} preperiodic points over $S$, giving rise to a Zariski-dense set of geometric preperiodic points in $\mathbf{X}$.  
\end{remark}

\begin{remark} \label{conjecture overlap}
There is interesting overlap of Conjectures \ref{geomBog} and \ref{characterization}  in the case where $\dim_S\cX = r_{\Phi,\cX} - 1$; that is, where $\cX$ is a family of hypersurfaces within a $\Phi$-special subvariety, but not $\Phi$-special itself.  In this case, Conjecture \ref{geomBog} combined with the Dynamical Manin-Mumford conjecture of \S\ref{DMM} (the $\dim S = 0$ case of Conjecture \ref{characterization}) implies the unknown implication (1) $\implies$ (2) of Conjecture \ref{characterization} for arbitrary $S$; the DMM conjecture is used to eliminate the possibility of an isotrivial factor which is not itself $\Phi$-special but which has a Zariski-dense set of preperiodic points.  But Conjecture \ref{characterization} does not seem to imply the unknown implication (1) $\implies$ (2) of Conjecture \ref{geomBog}, because it does not classify subvarieties where Zariski-density of preperiodic points fails.  
\end{remark}

\subsection{$J$-Stability in any dimension}
Let $\Phi: S\times\bP^N\to S \times\bP^N$ be an algebraic family of morphisms of degree $> 1$ and $\Crit(\Phi)$ the critical locus in $S \times \bP^N$.  Note that the relative dimension of $\Crit(\Phi)$ over $S$ is $N-1$, so for each irreducible component $\cX$ of $\Crit(\Phi)$, the relative special dimension $r_{\Phi, \cX}$ is either $N-1$ or $N$.

Condition (2) of Conjecture \ref{characterization} can be interpreted in terms of $J$-stability of the family, as mentioned in \S\ref{N=1} in the case $N=1$.  Following \cite{Bassanelli:Berteloot} and \cite{Berteloot:Bianchi:Dupont}, we say the family $\Phi$ is {\bf $J$-stable}, if 
	$$\hat{T}_\Phi^{\wedge N} \wedge [\Crit(\Phi)] = 0.$$ 
In dimension $N=1$, one recovers the notion of $J$-stability from \cite{Mane:Sad:Sullivan, Lyubich:stability}; see Remark \ref{J bif}.  If $r_{\Phi,\cX} = N$ for a component $\cX$ of the critical locus, the nonvanishing of the current $\hat{T}_\Phi^{\wedge r_{\Phi, \cX}}\wedge [\cX]$ in condition (2) of Conjecture \ref{characterization} implies instability of the family.  

In \cite{Berteloot:Bianchi:Dupont}, Berteloot, Bianchi, and Dupont proved that preperiodic points of $\Phi$ are dense in the support of $\hat{T}_{\Phi}^{\wedge N}\wedge [\Crit(\Phi)]$ in $S \times \bP^N$ (in the analytic topology), and so Zariski-dense in $\Crit(\Phi)$ itself, thus proving the implication (2) $\implies$ (1) of Conjecture \ref{characterization} in this setting.   In fact, they show that one can find a dense set of points preperiodic to {\em repelling} cycles in the support of $\hat{T}_{\Phi}^{\wedge N}\wedge [\Crit(\Phi)]$; see \cite[Theorem 1.6 and Proposition 3.8]{Berteloot:Bianchi:Dupont}.  The proof of Theorem \ref{easy} that we provide in Section \ref{easy section} follows a similar strategy.  

But the converse implication of Conjecture \ref{characterization}, that (1) $\implies$ (2) for all components $\cX$ of $\Crit(\Phi)$, remains open for every $N>1$.   For a component $\cX$ with $r_{\Phi,\cX} = N-1$, the conclusion is clear from Proposition \ref{rank inequality}.  But for $r_{\Phi, \cX} = N$, we do not know that Zariski-density of preperiodic points in $\cX$ is enough to guarantee instability, except in dimension $N=1$ (see \S\ref{N=1}), because we do not know {\em a priori} that the points are repelling.

Conjecture \ref{characterization} predicts a new characterization of $J$-stability, adding to the known list of characterizations in dimension 1 found in \cite{Berteloot:Bianchi:Dupont} and \cite{Berteloot:Buff}:

\begin{prop} \label{J N}
Conjecture \ref{characterization} implies that an algebraic family $\Phi$ of maps on $\bP^N$ of degree $d>1$ is $J$-stable if and only if each irreducible component $\cX$ of the critical locus $\Crit(\Phi)$ is either $\Phi$-special or the set $\Prep(\Phi) \cap \cX$ is contained in a proper algebraic subvariety of $\cX$.  
\end{prop}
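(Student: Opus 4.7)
The plan is to reduce the statement to a component-by-component analysis of $\Crit(\Phi)$, where each component is governed by its relative special dimension. Recall that $\Phi$ is $J$-stable, by definition, iff $\hat{T}_{\Phi}^{\wedge N}\wedge[\Crit(\Phi)]=0$. Since $[\Crit(\Phi)]=\sum_i m_i[\cX_i]$ decomposes into its irreducible components with positive multiplicities and each $\hat{T}_{\Phi}^{\wedge N}\wedge[\cX_i]$ is a positive measure on $S\times\bP^N$, the vanishing of the total current is equivalent to $\hat{T}_{\Phi}^{\wedge N}\wedge[\cX]=0$ for every irreducible component $\cX$ of $\Crit(\Phi)$. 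So it suffices to prove the claimed dichotomy one component at a time.

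Fix an irreducible component $\cX$ of $\Crit(\Phi)$. Because $\cX$ is a hypersurface flat over $S$, we have $\dim_S\cX=N-1$, and Proposition \ref{rank inequality} forces $r_{\Phi,\cX}\in\{N-1,N\}$. Suppose first that $r_{\Phi,\cX}=N-1$. By definition of $r_{\Phi,\cX}$ there is an irreducible $\Phi$-special subvariety containing $\cX$ of the same dimension as $\cX$, hence equal to $\cX$; thus $\cX$ is itself $\Phi$-special. In this case Proposition \ref{rank inequality} applied to $\cY=\cX$ gives $\hat{T}_{\Phi}^{\wedge N}\wedge[\cX]=0$ automatically, while the standard fact (as recalled in \S\ref{DMM}) that $\Phi$-special subvarieties carry a Zariski-dense set of $\Phi$-preperiodic points ensures that $\Prep(\Phi)\cap\cX$ is not contained in a proper subvariety. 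So for such components the dichotomy is settled by the first disjunct.

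In the remaining case $r_{\Phi,\cX}=N$, the component $\cX$ is not $\Phi$-special. Here Conjecture \ref{characterization}, applied with $r_{\Phi,\cX}=N$, asserts the equivalence
\[
\hat{T}_{\Phi}^{\wedge N}\wedge[\cX]\neq 0\ \liff\ \Prep(\Phi)\cap\cX \text{ is Zariski-dense in }\cX.
\]
Taking contrapositives, $\hat{T}_{\Phi}^{\wedge N}\wedge[\cX]=0$ iff $\Prep(\Phi)\cap\cX$ lies in a proper algebraic subvariety of $\cX$, so for such components the dichotomy is settled by the second disjunct. Combining the two cases across all components of $\Crit(\Phi)$ yields the desired equivalence.

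There is no genuine obstacle in the argument; the proof is a formal consequence of Conjecture \ref{characterization} together with Proposition \ref{rank inequality}. The only substantive point to keep straight is that the two disjuncts are mutually exclusive on a given component (since a $\Phi$-special $\cX$ always has Zariski-dense preperiodic points), so the statement really is a clean partition of the components of $\Crit(\Phi)$. I would organize the write-up as (i) the decomposition of the current over components, (ii) the case $r_{\Phi,\cX}=N-1$ handled by Proposition \ref{rank inequality}, and (iii) the case $r_{\Phi,\cX}=N$ handled by Conjecture \ref{characterization} directly.
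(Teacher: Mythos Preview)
Your argument is correct and matches the paper's own treatment: the paper does not give a formal proof after stating the proposition, but the surrounding discussion carries out exactly your component-by-component case split on whether $r_{\Phi,\cX}=N-1$ (so $\cX$ is $\Phi$-special and $\hat{T}_\Phi^{\wedge N}\wedge[\cX]=0$ by Proposition~\ref{rank inequality}) or $r_{\Phi,\cX}=N$ (so Conjecture~\ref{characterization} gives the equivalence with Zariski-density of preperiodic points). Your decomposition of $[\Crit(\Phi)]$ into components and the observation that the two disjuncts are mutually exclusive are also implicit in the paper's discussion, so there is no substantive difference in approach.
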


\begin{proof}
Let $\cX$ be an irreducible component of $\Crit(\Phi)$ in $S \times \bP^N$.  As a family of hypersurfaces, its relative special dimension is equal to either $N-1$ or $N$.  If $r_{\Phi, \cX}=N-1$, then it means that $\cX$ is $\Phi$-special.  If $r_{\Phi, \cX} = N$, then the $J$-stability of $\Phi$ implies that $\hat{T}_\Phi^{\wedge N} \wedge [\cX] = 0$.  The implication (2) $\implies$ (1) of Conjecture \ref{characterization} implies that preperiodic points of $\Phi$ cannot be Zariski-dense in $\cX$.  
\end{proof}

Finally, we remark that for $N>1$, we do not have a classification of families $\Phi$ for which the latter condition of Proposition \ref{J N} happens, i.e., when $\Prep(\Phi) \cap \cX$ is not Zariski-dense in a component $\cX$ of $\Crit(\Phi)$.  The Geometric Bogomolov Conjecture \ref{geomBog} above aims to rectify this, proposing that such an $\cX$ must come from an isotrivial factor.

\bigskip
\section{Forced intersections: the proof of Theorem \ref{easy}}
\label{easy section}

Throughout this section, we will assume that condition (2) of Conjecture \ref{characterization} holds.  That is, we let $\Phi: S\times\bP^N\to S \times\bP^N$ be an algebraic family of morphisms of degree $> 1$ and $\mathcal{X}\subset S\times\bP^N$ an irreducible subvariety which is flat over $S$. Let $\cY$ be a $\Phi$-special subvariety for $\Phi$ of relative dimension $r_{\Phi, \cX}$ that contains $\cX$, which is preperiodic for an endomorphism $\Psi$ that commutes with $\Phi$ on $\cY$.  Recall that the preperiodic points of $\Psi$ coincide with those of $\Phi$ in $\cY$.  We assume that 
	$$T := \hat{T}_{\Phi}^{\wedge r_{\Phi,\mathcal{X}}}\wedge [\mathcal{X}]\not= 0$$
as a current on $S \times \bP^N$.  

Recall that $r_{\Phi, \cX} \geq \dim_S \cX$ from Proposition \ref{rank inequality}, and let us first assume that $r_{\Phi, \cX} = \dim_S \cX$.  This means that $\cX$ is itself $\Phi$-special, so it is preperiodic for $\Psi$.  If $\dim_S \cX = 0$, then every point in $\cX$ is preperioidc for $\Phi$.  If $\dim_S \cX >0$, then, passing to a forward iterate $\cX'$ that is periodic for $\Psi$, the periodic points will be Zariski dense in every fiber $(\cX')_s$ over $s \in S$, as proved in \cite{Fakhruddin:selfmaps, Briend:Duval, Dinh:Sibony:allure}.  Therefore the preperiodic points of $\Phi$ will be Zariski dense in $\cX$, and so condition (1) of Conjecture \ref{characterization} holds in this case.

Now assume that $r_{\Phi, \cX} > \dim_S \cX$, so that $\cX$ is a proper subvariety of the $\Phi$-special $\cY$.  The slices of $\hat{T}_\Phi^{\wedge r_{\Phi, \cX}}$ over $S$ are measures of maximal entropy for the restriction of $\Psi$ to (an iterate of) $\cY$.  Working with an iterate of $\Psi$ instead of $\Phi$, and with forward iterates of $\cX$ and $\cY$, we will assume that $\cY$ is fixed by $\Phi$ itself, so that each vertical slice of $\hat{T}_\Phi^{\wedge r_{\Phi, \cX}}$ is the measure of maximal for $\Phi$ in $\cY_s$, for $s \in S$.  

To prove that (2) $\implies$ (1) in Conjecture \ref{characterization}, we follow the proof strategy of \cite[Theorem 0.1]{Dujardin:higherbif}, \cite[Theorem 1.6 and Proposition 3.8]{Berteloot:Bianchi:Dupont}, and \cite[Theorem 2.2]{Gauthier:abscont}.

Consider the nonzero current $T_S = \pi_*T$ on $S$, where $\pi: S \times\bP^N \to S$ is the projection.  Fix $\lambda_0$ in the support of $T_S$, and choose a hyperbolic repelling set $K_0$ for $\Phi_{\lambda_0}$ in $\cY_{\lambda_0}$; it moves holomorphically over a neighborhood $U$ of $\lambda_0$ in $S$ \cite[Theorem C]{Jonsson:motion}.  We may select $K_0$ so that it supports a probability measure $\nu$ with maximal entropy for the restriction of $\Phi_{\lambda_0}$ to $K_0$ and which is a measure of type PLB; see, for example, \cite[Theorems 3.2.1 and 3.9.5]{Dinh:Sibony:allure}, \cite[Theorems 3.9 and 3.11]{Berteloot:Bianchi:Dupont}, and \cite[Proposition 1.5]{Gauthier:abscont}.  In particular, $\nu$ puts no mass on proper analytic subvarieties of $\cY_{\lambda_0}$, and so its support is Zariski dense in $\cY_{\lambda_0}$.  Let 
	$$r := r_{\Phi, \cX} = \dim_S \cY$$ 
denote the relative dimension of $\cY$. For each $z \in K_0$, let $\Gamma_z$ denote the graph of the motion of $z$ over the open set $U$ in $S$, and define 
	$$\hat{\nu} = \int_{K_0} [\Gamma_z] \, d\nu.$$
Then $\hat{\nu}$ is a positive and uniformly laminar $(r,r)$-current on 
 	$$\cY_{U} := \pi^{-1}(U) \cap \cY.$$

Observe that 
	$$\frac{1}{d^{n r}} (\Phi|_{\cY}^n)^* \hat{\nu} \longrightarrow \hat{T}_\Phi^{\wedge r}$$
by the characterization of $\hat{T}_\Phi$ and the continuity of its potentials.  On the other hand, we know that $\hat{\nu} = dd^c V$ for a locally bounded current of type $(r-1, r-1)$ since $\nu$ is a PLB measure, as explained in the proof of \cite[Theorem 2.6]{Gauthier:abscont}.  It follows that 
	$$\frac{1}{d^{n r}} (\Phi^n)^* \hat{\nu} \wedge [\cX] \longrightarrow \hat{T}_\Phi^{\wedge r} \wedge [\cX] = T > 0.$$

Now consider the iterated images of $\cX$ over $U$; let $\cX_n = \Phi^n(\cX)$.  Since $T\not= 0$, we know that $\dim \cX \geq r$; let $\chi_U$ be a smooth function with compact support in  $U$ which is $\equiv 1$ in a neighborhood of $\lambda_0$.  Set 
	$$\beta = (\chi_U\circ \pi) \, \hat{\omega}^{\dim \cX - r}$$
on $\cY_U$, where $\hat\omega$ is a smooth $(1,1)$-form on $S\times\bP^N$ restricting to Fubini-Study on each fiber $\bP^N$.   Then 
\begin{eqnarray*}
\int_{\cY_U} (\Phi^n)^* \hat{\nu} \wedge [\cX]\wedge \beta 
	&=& \int_{\cY_U} \hat{\nu} \wedge (\Phi^n)_* [\cX] \wedge (\Phi^n)_* \beta \\
	&=& c_n\, \int_{\cY_U} \hat{\nu} \wedge [\cX_n] \wedge (\Phi^n)_* \beta 
\end{eqnarray*}
for some $c_n > 0$.  In particular, the current $\hat{\nu} \wedge [\cX_n]$ will be nonzero in $\cY_U$ for all sufficiently large $n$.  But by \cite[Theorem 3.1]{Dujardin:higherbif}, we know that
	$$\hat{\nu} \wedge [\cX_n] = \int_{K_0} [\Gamma_z] \wedge [\cX_n] \, d\nu,$$
and the intersections of $\cX_n$ with a positive $\nu$-measure set of graphs $\Gamma_z$ will be transverse, over a small neighborhood of $\lambda_0$.  Since the repelling periodic points are dense in $K_0$, it follows that $\cX_n$ must intersect the graphs of repelling points, in a set which is dense in a set of positive $\nu$-measure.  Moreover, these intersections are Zariski dense in $\cX_n$.  We conclude that the preperiodic points are Zariski dense in $\cX$. 

This completes the proof of Theorem \ref{easy}. \qed

\bigskip \bigskip

\def\cprime{$'$}

\bigskip\bigskip

\end{document}